\newcommand{\N}{{\mathbb{N}}}
\newcommand{\R}{{\mathbb{R}}}
\newcommand{\C}{{\mathbb{C}}}
\newtheorem{Theorem}{Theorem}
\newtheorem{Proposition}{Proposition}
\newtheorem{Lemma}{Lemma}
\newtheorem{Remark}{Remark}
\newtheorem{Definition}{Definition}
\begin{document}


\title{HOPF BIFURCATION IN AN OSCILLATORY-EXCITABLE REACTION-DIFFUSION MODEL WITH SPATIAL HETEROGENEITY }

\author{Benjamin Ambrosio\\
Normandie Univ, UNIHAVRE, LMAH,  FR CNRS 3335, ISCN,\\76600 Le Havre, France. \\
Mail: benjamin.ambrosio@univ-lehavre.fr}

\maketitle

\begin{abstract}
We focus on the qualitative analysis of a reaction-diffusion with spatial heterogeneity.  The system is a generalization of the well known FitzHugh-Nagumo system in which the excitability parameter is space dependent. This heterogeneity allows to exhibit concomitant stationary and oscillatory phenomena. We  prove the existence of an Hopf bifurcation and determine an equation of the center-manifold in which the solution asymptotically evolves. Numerical simulations illustrate the phenomenon.
\end{abstract}

\section{Introduction}
\noindent
The following reaction-diffusion system of FitzHugh-Nagumo (FHN) type:
\begin{center}
\begin{equation}
\label{FHNRD}
 \left \{ 
     \begin{array}{rcl}
      \epsilon u_t&=& f(u)-v +d_u\Delta u,\\
      v_t&=& u-c(x,t) -\delta v+d_v\Delta v,\\
     \end{array}
      \right. 
\end{equation}
\end{center}
where $f(u)=-u^3+3u$, $\epsilon>0$ small, $\delta \geq 0$, $c(x)$  regular function, $d_u\geq 0$, $d_v\geq 0$, $d_ud_v\neq 0$,  and with Neumann Boundary (NBC) conditions on a regular bounded domain $\Omega$, is relevant for obtaining different kind of patterns and interesting phenomena in physiological context.  A  property of system \eqref{FHNRD} is that, due to the dependence of $c$ on space variable $x$, it can take advantage of both excitability and oscillatory regimes of the FHN system. Therefore, interesting phenomena can be obtained with this single Partial Differential Equation such  as spirals, mixed mode oscillations (MMO's), propagation of bursting oscillations, see \cite{Amb09}. Recall that the FitzHugh-Nagumo model, widely used in  mathematical neuroscience, is obtained by a reduction of the Hodgkin-Huxley model (4 equations) awarded  by the 1963 Nobel prize of Physiology and Medicine, see \cite{Fit61,HH,Nag} for original papers or for example  \cite{Izhik,Erm10} for good fundamental books. In this article, we focus on equation \eqref{FHNRD} in the case where  $c$ is only depending on $x$, $\delta=d_v=0, d_u=d$, and the space dimension is 1, i.e.:
\begin{center}
\begin{equation}
\label{FHNRD1D}
 \left \{ 
     \begin{array}{rcl}
      \epsilon u_t&=& f(u)-v +d u_{xx}\\
      v_t&=& u-c(x)\\
     \end{array}
      \right. 
\end{equation}
\end{center} 
on a real open interval $\Omega=(-a,a), a>0$ and with NBC $u'(-a)=u'(a)=0$. 
In order to understand the qualitative  behavior of system \eqref{FHNRD1D}, we must recall the behavior of the underlying ODE system:
\begin{equation}
\label{FHNdl}
 \left \{ 
     \begin{array}{rcl}
      \epsilon u_t&=& f(u)-v\\
      v_t&=& u-c.\\
     \end{array}
      \right. 
\end{equation}
We have for appropriate values of parameters, the following theorem, see \cite{Amb09-Th} and references therein, which is illustrated in figure \ref{figsolEDO}.
\begin{Theorem}
There exists a unique stationary point. If $|c|\geq 1$ the stationary point is globally asymptotically stable, whereas if $|c|<1$, it is unstable and there exists a unique limit-cycle that attracts all the non constant trajectories. Furthermore, at $|c|=1$, there is a supercritical Hopf bifurcation.  
\end{Theorem}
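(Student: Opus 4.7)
The plan is to treat separately the algebraic part (existence, uniqueness, and local stability of the equilibrium), the dynamical part (global attraction for $|c|\geq 1$ versus existence and uniqueness of a limit cycle for $|c|<1$), and the bifurcation analysis at $|c|=1$.

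First, setting both right-hand sides of \eqref{FHNdl} to zero forces $u=c$ and $v=f(c)$, so the equilibrium $P^*=(c,f(c))$ is unique. At $P^*$ the Jacobian reads
\[
J=\begin{pmatrix} f'(c)/\epsilon & -1/\epsilon \\ 1 & 0 \end{pmatrix},
\]
with $\det J = 1/\epsilon > 0$ and $\mathrm{tr}\, J = 3(1-c^2)/\epsilon$. Consequently $P^*$ is locally asymptotically stable for $|c|>1$, linearly unstable for $|c|<1$, and has a pair of purely imaginary eigenvalues $\pm i/\sqrt{\epsilon}$ at $|c|=1$, with the transversality condition $\partial_c\,\mathrm{tr}\,J=\mp 6/\epsilon\neq 0$ automatically satisfied.

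Second, I would exhibit a large rectangular trapping region $\mathcal{R}\subset\mathbb{R}^2$ by checking that the vector field points inward along its boundary; the cubic shape of the $u$-nullcline $v=f(u)$ and the vertical $v$-nullcline $u=c$ make this immediate. The Poincar\'e--Bendixson theorem then reduces the $\omega$-limit set of every trajectory to equilibria and periodic orbits. Rewriting \eqref{FHNdl} in Li\'enard form $\epsilon u_{tt}-f'(u)u_t+(u-c)=0$, I would invoke the classical uniqueness results for Li\'enard equations (Zhang Zhifen and its refinements) to show that for $|c|<1$ there is exactly one periodic orbit; since $P^*$ is then repelling, this orbit attracts every non-constant trajectory. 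For $|c|\geq 1$ the same Li\'enard analysis, or alternatively a Bendixson--Dulac argument with a suitable weight $\varphi(u,v)$, excludes closed orbits, and combined with the local stability of $P^*$ this yields global asymptotic stability.

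Finally, at $|c|=1$ I would establish supercriticality by computing the first Lyapunov coefficient $\ell_1$. After translating $P^*$ to the origin and diagonalizing the linear part, the cubic nonlinearity of $f(u)=-u^3+3u$ provides the only relevant higher-order term, and the standard normal-form formula reduces $\ell_1$ to a short calculation whose sign turns out negative; the symmetry $(u,v,c)\mapsto(-u,-v,-c)$ of \eqref{FHNdl} then handles $c=-1$ at once. The main obstacle is the uniqueness of the limit cycle for $|c|<1$: verifying the hypotheses of the Li\'enard uniqueness theorem for the specific damping $g(u)=3(u^2-1)/\epsilon$ and restoring force $h(u)=(u-c)/\epsilon$, uniformly in $c\in(-1,1)$, is where most of the technical work lies.
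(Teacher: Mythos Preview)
The paper does not actually prove this theorem: it is quoted as background for the ODE \eqref{FHNdl} with the sentence ``see \cite{Amb09-Th} and references therein,'' and no argument is given. So there is nothing to compare your proof against at the level of method; your outline \emph{is} the only proof in sight.

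As an outline it is sound and follows the standard route for planar FitzHugh--Nagumo systems: the Jacobian computation, trace/determinant signs, and transversality at $|c|=1$ are all correct; the Li\'enard reduction $\epsilon u_{tt}-f'(u)u_t+(u-c)=0$ is right, and invoking a Zhang--Zhifen type criterion for uniqueness of the closed orbit when $|c|<1$ is exactly how this is usually done. Two small points to tighten. First, in the borderline case $|c|=1$ you appeal to ``local stability of $P^*$'' before you have it: linearisation gives a centre there, so local asymptotic stability only follows from the sign of $\ell_1$, which you compute afterwards; just reorder the argument. Second, the naive Bendixson divergence test fails here because $\operatorname{div}=3(1-u^2)/\epsilon$ changes sign, so if you go the Dulac route you must actually produce the weight $\varphi$; alternatively, the Li\'enard framework (or a Lyapunov function built from the primitive of $f$) handles the exclusion of closed orbits for $|c|\ge 1$ more directly. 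None of this is a genuine gap, only places where a reader will want to see the details you have flagged as ``the main obstacle.''
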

Another important feature of system  \eqref{FHNdl} is excitability: for  $|c|>1$ and $|c|$ not so far from $1$,  if a solution is taken away from a neighborhood of the stable point in a suitable direction,  it  undergoes trough a large oscillation before returning to its stable state.  This can be well understood by slow-fast analysis. Typical behaviors are represented in figure \ref{figsolEDO}.
\begin{center}
\begin{figure}[h!]
\includegraphics[scale=.5]{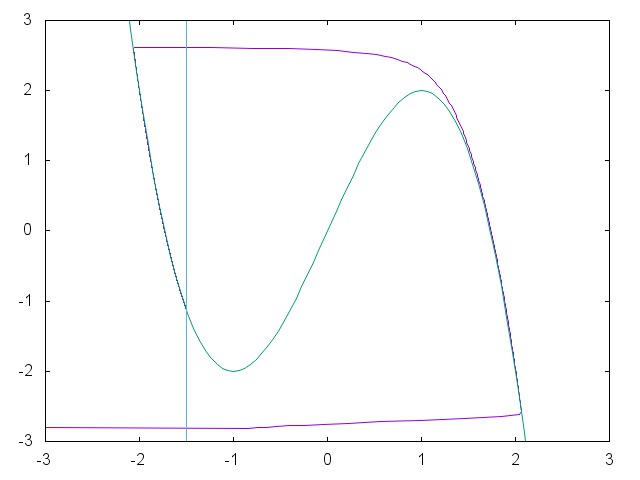}
\includegraphics[scale=.5]{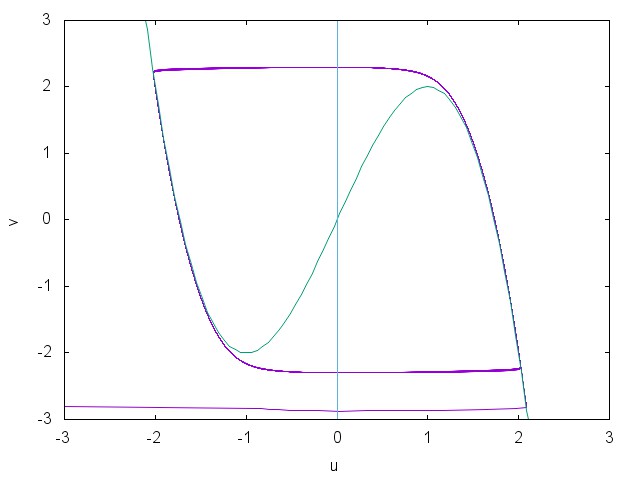}
\label{figsolEDO}
\caption{Solutions of system \eqref{FHNdl}, for typical values of $c$.The left panel illustrates excitability. The right panel illustrates oscillatory behavior.}
\end{figure}
\end{center}
Since the $c$ parameter is space dependent, we can couple oscillatory and excitable behavior via the diffusion term. For $x$ within a  central region, we choose $c(x)$ such that the system is in an oscillatory regime, whereas we choose  it excitatory anywhere else. We then address the question of wave propagation: will the center oscillations propagate along the domain trough excitability? We prove theoretically and show numerically that this depends on a parameter of excitability of the excitable cells. Varying this parameter, system \eqref{FHNRD1D} exhibits stable behavior or propagation of oscillations. This phenomenon occurs through an Hopf bifurcation in the infinite dimensional system \eqref{FHNRD1D}. Note that we have already exploited such an idea  in \cite{KAA} in the case of two coupled ODE slow-fast systems. The article is divided as follows:  we study the spectrum properties of the linearized system of \eqref{FHNRD1D} in the second section. In the third part, we apply the center manifold  theorem and compute restricted equations. Finally, in the fourth section we investigate numerically  the phenomenon.

\section{Hopf bifurcation for system \eqref{FHNRD1D}}
As in the case of ODE's, the linear stability analysis near the stationary solution gives some insights on the qualitative behavior of the system and allows to compute the equation for center manifold. Some theories have been developed,  see \cite{Carr82, Hen81, Kuz98}, however, the rigorous proofs in infinite dimensional  uses  strong theoretical background.  In this short paper, we will concentrate on the spectral properties of  the linearized operator and on the computation of the center manifold, leaving the more theoretical aspects for a forthcoming article.   In this section, we shall prove the existence of an Hopf bifurcation for system \eqref{FHNRD1D}. Some linear stability analysis for reaction-diffusion FitzHugh-Nagumo  systems has already been studied, see for example \cite{Cha74, Frei01, Rau78}, whereas a non-homogeneous FHN Reaction Diffusion system has been introduced in \cite{Dik05}. However, the following analysis, involving such a non-homogeneous space dependent term $c(x)$, is new.  After linearization near the stationary solution,  we  obtain an equation of regular Liouville type. We prove the positivity of an eigenvalue  for  small enough values of the bifurcation parameter by using classical spectral analysis. The remaining of the proof of the Hopf bifurcation, consists in proving that  an eigenvalue crosses the real axis as a parameter is varied. For this, we  introduce a polar change of coordinates. Then, the result follows from comparison theorems for ODE's.  We assume that the function $c(x)$, depending on a parameter $p>0$, is regular and satisfies the following conditions:
\begin{eqnarray}
c(x)\leq 0& \forall x \in (-a,a),\\
c(0)=0,\\
c'(x)>0& \forall x \in (-a,0),\, c'(x)<0 \, \forall x \in (0,a),\\
c'(-a)=c'(a)=0,\\
\forall x \in (-a,a), x \neq 0,&  c(x) \mbox{ is a decreasing function of }p, \\
\forall x \in (-a,a), x \neq 0,&\lim_{p\rightarrow 0}c(x)=0,\\
\forall x \in (-a,a), x \neq 0,&\lim_{p\rightarrow +\infty}c(x)=-\infty.
\end{eqnarray}
A typical function $c$ is for example: 
\[c(x)=p(\frac{x^4}{a^4}-2\frac{x^2}{a^2}).\]

Let  $X=C([-a,a],\R^2)$, endowed with the scalar product,
\[<(u_1,v_1),(u_2,v_2)>=\int_{-a}^au_1u_2dx+\int_{-a}^av_1v_2dx\]
It is a classical question that  equation \eqref{FHNRD1D} generates a dynamical system on $X\times X$.
Now, let us remark that the stationary solution is given by:
\begin{equation}
\label{FHNRD1Dsta}
 \left \{ 
     \begin{array}{rcl}
       \bar{v}&=& f(\bar{u})+d_u \bar{u}_{xx},\\
      \bar{u}&=& c(x).\\
     \end{array}
      \right. 
\end{equation}
The linearized system around $(\bar{u},\bar{v})$ is:  

\begin{equation}
\label{FHNLin}
 \left \{ 
     \begin{array}{rcl}
      \epsilon u_t&=& f'(\bar{u})u-v+du_{xx}, \\
      v_t&=& u.\\
     \end{array}
      \right. 
\end{equation}

We introduce the linear operator $\mathcal{F}$ with domain $\mathcal{D}(\mathcal{F})\{u,v \in C^2((-a,a)); u'(-a)=u'(a)=0\}$:

\begin{equation*}
\label{Op}
\mathcal{F}(u,v)=
 \left \{ 
     \begin{array}{l}
      \frac{1}{\epsilon}\big(f'(\bar{u})u-v+d u_{xx}\big), \\
       u.\\
     \end{array}
      \right. 
\end{equation*}

We proceed to the spectral analysis. We look for functions $u$, $v$ and numbers $\lambda$ such that:

\begin{equation*}
\label{eq:Vp}
 \left \{ 
     \begin{array}{rl}
      \frac{1}{\epsilon}\big(f'(\bar{u})u-v+d u_{xx}\big) &=\lambda u, \\
       u&=\lambda v,\\
     \end{array}
      \right. 
\end{equation*}
which is equivalent to
\begin{equation*}
\label{eq:Vp2}
 \left \{ 
     \begin{array}{rl}
 f'(\bar{u})u-\frac{u}{\lambda }+d u_{xx}&=\lambda\epsilon u, \\
       v&=\frac{u}{\lambda },\\
     \end{array}
      \right. 
\end{equation*}
or,
\begin{equation}
\label{eq:Vp3}
 \left \{ 
     \begin{array}{rl}
 -d u_{xx}-f'(\bar{u})u&=-\big(\frac{1}{\lambda }+\lambda\epsilon\big) u, \\
       v&=\frac{u}{\lambda }.\\
     \end{array}
      \right. 
\end{equation}

We set:
\[\nu=-(\frac{1}{\lambda }+\lambda\epsilon),\]
then the first equation writes,
\begin{equation}
\label{eq:SL}
-d u_{xx}-f'(\bar{u})u=\nu u, 
\end{equation}
and we have,
\begin{equation*}
\lambda^+_- =\frac{-\nu ^+_-\sqrt{\nu^2-4\epsilon}}{2\epsilon} .
\end{equation*} 

Note that equation \eqref{eq:SL} is a regular Sturm-Liouville problem.
We have the classical following theorem, see \cite{Tes10}, p 160-162.
\begin{Theorem}
\label{th:spec}
There exists an increasing sequence of real numbers $\nu_n$ and an orthogonal basis $(u_n)_{n\in \N}$ of $L^2(\Omega)$ such that:
\begin{equation}
\begin{array}{rcl}
\label{SL}
-du_{nxx}-f'(\bar{u})u_n&=&\nu_n u_n\\
u'(a)=u'(b)&=&0. 
\end{array}
\end{equation}
Furthermore, 
\[\lim_{n \rightarrow +\infty}\nu_n=+\infty,\]
and,
\begin{equation}
\label{SL}
\nu_0 = \inf_{u \in D(\mathcal{F});|u|_{L^2(\Omega)}=1} d\int_\Omega|\nabla u|^2dx-\int_\Omega f'(\bar{u})u^2dx. 
\end{equation} 
\end{Theorem}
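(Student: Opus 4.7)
The plan is to realize the Sturm--Liouville problem \eqref{SL} as the eigenvalue problem of a self-adjoint operator with compact resolvent on $L^2(\Omega)$, and then invoke the spectral theorem together with the Courant--Fischer min-max principle.

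First I would introduce the symmetric bilinear form
\begin{equation*}
a(u,v)=d\int_{-a}^{a} u'v'\,dx-\int_{-a}^{a} f'(\bar u)uv\,dx
\end{equation*}
on the Hilbert space $V=\{u\in H^1(-a,a)\}$ (the Neumann boundary condition is natural and will be recovered weakly). Since $\bar u=c(x)$ is bounded and $f'$ is continuous, $f'(\bar u)$ is a bounded function on $[-a,a]$, hence there exists a constant $C>0$ such that the shifted form $a_C(u,v)=a(u,v)+C\langle u,v\rangle_{L^2}$ is continuous and coercive on $V$. By Lax--Milgram, for every $g\in L^2(\Omega)$ there is a unique $u\in V$ with $a_C(u,\varphi)=\langle g,\varphi\rangle_{L^2}$ for all $\varphi\in V$; elliptic regularity then gives $u\in C^2$ with $u'(\pm a)=0$ and $-du''-f'(\bar u)u+Cu=g$, so the solution operator $T:g\mapsto u$ maps $L^2(\Omega)$ into $V$.

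Next I would establish that $T:L^2(\Omega)\to L^2(\Omega)$ is compact and self-adjoint. Compactness follows because $T$ factors through $V\hookrightarrow L^2(\Omega)$ and this embedding is compact by the Rellich--Kondrachov theorem on the bounded interval $(-a,a)$. Self-adjointness follows from the symmetry of $a_C$: for $g_1,g_2\in L^2$ with $u_i=Tg_i$, one has $\langle Tg_1,g_2\rangle=a_C(u_1,u_2)=a_C(u_2,u_1)=\langle g_1,Tg_2\rangle$. The spectral theorem for compact self-adjoint operators then furnishes an orthonormal basis $(u_n)$ of $L^2(\Omega)$ of eigenfunctions with eigenvalues $\mu_n\to 0$. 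Undoing the shift, $u_n$ solves \eqref{SL} with $\nu_n=\mu_n^{-1}-C$; since the $\mu_n$ are real and accumulate only at $0$ from one side (the coercivity of $a_C$ forces $\mu_n>0$), the $\nu_n$ are real, bounded below, and $\nu_n\to+\infty$. Reindexing in increasing order gives the stated sequence.

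Finally, the variational characterization of $\nu_0$ follows from the Courant--Fischer / Rayleigh quotient formula applied to the quadratic form $a$: among all normalized $u\in V$, the infimum of $a(u,u)$ is attained at the first eigenfunction and equals the smallest eigenvalue. Writing this infimum out gives exactly
\begin{equation*}
\nu_0=\inf_{\substack{u\in V\\ \|u\|_{L^2}=1}}\Bigl(d\int_\Omega|u'|^2\,dx-\int_\Omega f'(\bar u)u^2\,dx\Bigr),
\end{equation*}
which is the claimed identity (the domain can be replaced by $\mathcal{D}(\mathcal{F})$ by density of $C^2$ functions satisfying the Neumann condition in $V$). The main technical point is the compactness of the resolvent; everything else is abstract spectral theory. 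In this one-dimensional regular setting this is completely standard, so the reference to \cite{Tes10} is natural and no genuine obstacle arises.
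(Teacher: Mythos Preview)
Your argument is correct and is one of the standard proofs of this classical regular Sturm--Liouville result. Note, however, that the paper does not actually prove Theorem~\ref{th:spec}: it is simply stated as a classical fact with a page reference to \cite{Tes10}. So there is no ``paper's own proof'' to compare against; you have supplied a complete functional-analytic proof (coercive shifted form, Lax--Milgram, compact resolvent via Rellich, spectral theorem for compact self-adjoint operators, Rayleigh quotient) where the paper is content to cite the literature. Teschl's treatment in the cited pages proceeds along essentially the same lines, so your write-up is consistent with the intended reference.
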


We deduce the following proposition,
\begin{Proposition}
\label{Prop:solinst}
We assume that
\begin{equation}
\int_\Omega f'(\bar{u})dx>0,
\end{equation}
then at least one eigenvalue of $\mathcal{F}$ has a positive real part. 
\end{Proposition}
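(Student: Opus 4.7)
The plan is to use the variational characterization of the smallest Sturm--Liouville eigenvalue $\nu_0$ given in Theorem \ref{th:spec}, combined with the algebraic relation $\nu = -(\tfrac{1}{\lambda} + \lambda\epsilon)$ that links eigenvalues of the scalar operator to eigenvalues of $\mathcal{F}$.

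First I would show that $\nu_0 < 0$. The variational formula gives
\[
\nu_0 = \inf_{|u|_{L^2}=1}\Bigl( d\int_\Omega |\nabla u|^2 dx - \int_\Omega f'(\bar u) u^2 dx \Bigr),
\]
and since constant functions belong to $\mathcal{D}(\mathcal{F})$ (they satisfy the Neumann condition), one may plug in the test function $u \equiv \frac{1}{\sqrt{2a}}$, for which $|\nabla u|\equiv 0$ and $|u|_{L^2}=1$. This gives directly
\[
\nu_0 \leq -\frac{1}{2a}\int_\Omega f'(\bar u)\,dx < 0
\]
under the hypothesis of the proposition.

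Next I would go back to the equivalence \eqref{eq:Vp3}: to every eigenpair $(u_n,\nu_n)$ of \eqref{SL} corresponds an eigenvalue $\lambda$ of $\mathcal{F}$ with eigenfunction $(u_n, u_n/\lambda)$ through the quadratic $\epsilon \lambda^2 + \nu_n \lambda + 1 = 0$, giving
\[
\lambda^\pm = \frac{-\nu_n \pm \sqrt{\nu_n^2 - 4\epsilon}}{2\epsilon}.
\]
Applying this with $\nu_0 < 0$ and distinguishing two cases: if $\nu_0^2 \geq 4\epsilon$ the discriminant is nonnegative so both roots are real, and $\lambda^+$ is a sum of two nonnegative real numbers with $-\nu_0>0$, hence strictly positive; if $\nu_0^2 < 4\epsilon$ the two roots are complex conjugates with real part $-\nu_0/(2\epsilon) > 0$. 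In either case $\mathcal{F}$ has at least one eigenvalue with strictly positive real part, which is what we want.

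There is no real obstacle here; the only point requiring mild care is the justification that the scalar eigenproblem \eqref{eq:SL} is indeed the right object to diagonalize $\mathcal{F}$, i.e.\ that no eigenvalue of $\mathcal{F}$ is missed and that $\lambda=0$ is excluded (which is automatic from the quadratic $\epsilon\lambda^2 + \nu\lambda + 1 = 0$ having nonzero constant term). Once this correspondence is accepted, the proof reduces to a one-line test-function estimate followed by solving a quadratic.
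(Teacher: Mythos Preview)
Your proof is correct and follows exactly the paper's approach: test the variational formula for $\nu_0$ with the constant function $1/\sqrt{|\Omega|}$ to get $\nu_0<0$, then read off that $\lambda_0^\pm$ has positive real part from the quadratic relation. The paper is slightly terser (it does not split into the two discriminant cases), but the argument is the same.
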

\begin{proof}
We consider $u=\frac{1}{\sqrt{|\Omega|}}$. Then,
\begin{equation*}
\nu_0 \leq -\int_\Omega f'(\bar{u})u^2dx<0,
\end{equation*} 
and
\begin{equation}
\label{eq:lam0}
{\lambda_{0}} ^+_- =\frac{-{\nu_0}^+_-\sqrt{\nu_0^2-4\epsilon}}{2\epsilon} 
\end{equation} 
has a positive real part. 
\end{proof}
\begin{Remark}
This proposition shows a result of instability which is directly linked with the stability of the ODE system \eqref{FHNdl}. Indeed, the assumption $|c|<1$ (instable steady state) corresponds to $f'(c)>0$ whereas  the assumption $|c|\geq 1$ corresponds to $f'(c)\leq 0$. For equation \eqref{FHNRD1D}, the asumption $f'(c)>0$ for instability is replaced by $\int_\Omega f'(\bar{u}(x))>0$.
\end{Remark}
Next, we prove that as $p$ decreases from $+\infty$ to $0$, the eigenvalue with the greatest real part crosses the imaginary axis from left to right, this proves the existence of the Hopf bifurcation. We start with the following lemma.
\begin{Lemma}
The equation \eqref{eq:SL}:
\begin{equation*}
-du_{xx}-(\nu+f'(\bar{u}))u=0,
\end{equation*}
rewrites
\begin{eqnarray}
\theta_x&=&g(\theta)=\cos^2\theta+\frac{f'(\bar{u})+\nu}{d}\sin^2\theta, \label{eq:theta}\\
r_x&=&\frac{\sin2\theta}{2} (1-\frac{\nu+f'(\bar{u})}{d})r, \label{eq:r}
\end{eqnarray}
with
 \begin{equation}
\label{chv}
u=r\sin\theta, u_x=r\cos\theta.
\end{equation}
\end{Lemma}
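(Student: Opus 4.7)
The plan is to carry out the standard Prüfer transformation for the second-order linear ODE \eqref{eq:SL}. The change of variables \eqref{chv} is really a polar-style parametrization of the phase vector $(u,u_x)$, so I would set up two equations by differentiating the two defining relations $u=r\sin\theta$ and $u_x=r\cos\theta$ with respect to $x$, and then solve the resulting $2\times 2$ linear system for $(\theta_x,r_x)$.

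Concretely, I would first differentiate $u=r\sin\theta$ to obtain $u_x = r_x\sin\theta + r\cos\theta\,\theta_x$, and then identify this with the independent expression $u_x=r\cos\theta$. This yields the relation
\begin{equation*}
r_x\sin\theta + r\cos\theta\,\theta_x = r\cos\theta. \tag{A}
\end{equation*}
Next, I would differentiate $u_x=r\cos\theta$ to obtain $u_{xx} = r_x\cos\theta - r\sin\theta\,\theta_x$, and substitute $u_{xx} = -\tfrac{\nu+f'(\bar u)}{d}\,u = -\tfrac{\nu+f'(\bar u)}{d}\,r\sin\theta$ from \eqref{eq:SL}, giving
\begin{equation*}
r_x\cos\theta - r\sin\theta\,\theta_x = -\frac{\nu+f'(\bar u)}{d}\,r\sin\theta. \tag{B}
\end{equation*}

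The remaining step is purely algebraic: to isolate $\theta_x$, multiply (A) by $\cos\theta$ and (B) by $\sin\theta$ and subtract, which eliminates $r_x$ and leaves $r\theta_x = r\cos^2\theta + \tfrac{\nu+f'(\bar u)}{d}\,r\sin^2\theta$, i.e.\ equation \eqref{eq:theta} after dividing by $r$. To isolate $r_x$, multiply (A) by $\sin\theta$ and (B) by $\cos\theta$ and add, which eliminates $\theta_x$ and yields $r_x = r\sin\theta\cos\theta\bigl(1-\tfrac{\nu+f'(\bar u)}{d}\bigr)$; using $2\sin\theta\cos\theta = \sin 2\theta$ gives exactly equation \eqref{eq:r}.

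There is no real obstacle here: the only subtlety is the harmless division by $r$, which is legitimate wherever $r>0$, and this holds away from the trivial solution since $(u,u_x)$ cannot vanish simultaneously for a nonzero solution of the second-order linear equation \eqref{eq:SL} by uniqueness. I would briefly mention this point so that the derived system \eqref{eq:theta}--\eqref{eq:r} is genuinely equivalent (on nontrivial solutions) to \eqref{eq:SL}.
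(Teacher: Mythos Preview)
Your proof is correct and follows essentially the same route as the paper: differentiate the two defining relations of \eqref{chv}, form the appropriate $\sin\theta/\cos\theta$ linear combinations of the resulting equations to isolate $r_x$ and $r\theta_x$, and substitute $u_x=r\cos\theta$ and $u_{xx}=-\tfrac{\nu+f'(\bar u)}{d}\,r\sin\theta$. The only difference is cosmetic (you substitute before combining, the paper after), and your remark on the division by $r$ is a welcome addition the paper omits.
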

\begin{proof}
We have
\begin{equation*}
u_{x}=r_x\sin\theta+r\cos\theta\theta_x,
\end{equation*}
and
\begin{equation*}
u_{xx}=r_x\cos\theta-r\sin\theta\theta_x.
\end{equation*}

Multiplying the first equation by $\sin\theta$ and the second one by $\cos\theta$, adding both, we find:
\begin{equation*}
\begin{array}{rcl}
r_x&=&u_x\sin\theta+u_{xx}\cos\theta,\\
&=&r\sin\theta\cos\theta -\frac{f'(\bar{u})+\nu}{d}r\sin\theta\cos\theta, \\
&=&r\frac{\sin(2\theta)}{2}(1-\frac{\nu+f'(\bar{u})}{d}).
\end{array}
\end{equation*}
Multiplying the first equation by $\cos\theta$ and the second one by $-\sin\theta$, adding both, we find:
\begin{equation*}
\begin{array}{rcl}
r\theta_x&=&u_x\cos\theta-u_{xx}\sin\theta,\\
&=&r\cos^2\theta +\frac{f'(\bar{u})+\nu}{d}r\sin^2\theta,
\end{array}
\end{equation*}
which gives
\begin{equation*}
\theta_x=\cos^2\theta+\frac{f'(\bar{u})+\nu}{d}\sin^2\theta,
\end{equation*}
or equivalently
\begin{equation*}
\theta_x=1+(\frac{f'(\bar{u})+\nu}{d}-1)\sin^2\theta.
\end{equation*}
\end{proof}
In the following, we set
\[g(\theta)=\cos^2\theta+\frac{f'(\bar{u})+\nu}{d}\sin^2\theta.\]

The equation \eqref{eq:theta}  depends only upon $\theta$. Knowing $\theta$, equation \eqref{eq:r} gives:
\begin{equation*}
r(x)=r(-a)\exp{\int_{-a}^x\frac{\sin(2\theta(y))}{2} (1-\frac{\nu+f'(\bar{u}(y))}{d})dy}.
\end{equation*}

Therefore, we focus on the solutions of equation $\eqref{eq:theta}$. Since $u$ verifies NBC, we restrict ourselves to solutions with $\theta(-a)=\frac{\pi}{2}$ and $\theta(a)=\frac{\pi}{2} \mod \pi$. Note also that since $g(\theta)$ is $\pi$ periodic, if $\theta$ is solution also $\theta +n\pi$ is. It is therefore sufficient to consider initial conditions with $\theta(-a)=\frac{\pi}{2}$. Hence, we consider  solutions of \eqref{eq:theta} satisfying $\theta(-a)=\frac{\pi}{2}$, and we let a free boundary condition at $x=a$. Therefore, we obtain a Cauchy problem. Among all the solutions of the Cauchy problem, only those satisfying $\theta(a)=\frac{\pi}{2} \mod \pi$  correspond to eigenfunctions.
Next, we prove the following proposition which gives some qualitative behavior of $\theta$ for each $x \in [-a,a]$.
\begin{Proposition}
The function $\theta$ has the following properties:
\begin{eqnarray}
\forall x \in [-a,a],  \theta(x)>0,\\
\forall x\in (-a,a), \theta(x) \mbox{ is an increasing function of } \nu,\\
\forall x\in (-a,a), \lim_{\nu\rightarrow -\infty}\theta(x)=0,\\
\lim_{\nu\rightarrow +\infty}\theta(x)=+\infty.
\end{eqnarray}
\end{Proposition}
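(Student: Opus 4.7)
The plan is to treat the four assertions (i)--(iv) in sequence, reducing the singular limits (iii) and (iv) to autonomous constant-coefficient ODEs via the comparison argument established in (ii). Throughout I write $K(x) := (f'(\bar u(x)) + \nu)/d$, so the equation reads $\theta_x = \cos^2\theta + K(x)\sin^2\theta$.

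For (i), the key observation is $g(n\pi) = 1 > 0$ for every integer $n$: whenever $\theta$ hits a multiple of $\pi$, $\theta_x = 1$. Hence, starting from $\theta(-a) = \pi/2 > 0$, if $\theta$ ever vanished let $x_0$ be the infimum of its zero set; then $\theta(x_0) = 0$ and $\theta_x(x_0) = 1 > 0$ force $\theta < 0$ just to the left of $x_0$, contradicting the minimality of $x_0$. For (ii), I compare two solutions $\theta_1,\theta_2$ associated with $\nu_1 < \nu_2$; writing $w := \theta_2 - \theta_1$ and applying the mean-value theorem in the $\theta$-variable yields
\begin{equation*}
w_x = G(x)\,w + \frac{\nu_2-\nu_1}{d}\sin^2\theta_2(x),\qquad w(-a) = 0,
\end{equation*}
with $G$ bounded. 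The source is non-negative and strictly positive except on the isolated set $\{\theta_2 \in \pi\mathbb{Z}\}$ (at each such point $\theta_{2,x} = 1 \neq 0$ by the ODE), so the variation-of-constants formula yields $w(x) > 0$ for every $x > -a$.

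For (iii), set $M(\nu) := \sup_{x \in [-a,a]} K(x) \to -\infty$ as $\nu \to -\infty$, and let $\tilde\theta$ solve the constant-coefficient ODE with $K \equiv M$ and the same initial value $\pi/2$. The comparison argument of (ii), applied now to the inequality $K(x) \leq M$, gives $\theta \leq \tilde\theta$. The substitution $\phi := \cot\tilde\theta$ converts the autonomous equation into the Riccati form $\phi_x = -\phi^2 - M$ with $\phi(-a) = 0$, whose attracting equilibrium is $\sqrt{-M}$. The elementary bound $\phi_x \geq 3(-M)/4$ on $\{0 \leq \phi \leq \sqrt{-M}/2\}$ shows $\phi$ reaches $\sqrt{-M}/2$ within a boundary layer of width $O(1/\sqrt{-M})$ and stays above it afterwards; hence for every fixed $x_0 \in (-a, a)$, $\tilde\theta(x_0) \leq \arctan(2/\sqrt{-M}) \to 0$, and (i) then forces $\theta(x_0) \to 0$. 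For (iv), set $m(\nu) := \inf_{x\in[-a,a]} K(x) \to +\infty$ and let $\tilde\theta$ solve the autonomous ODE with $K \equiv m$; comparison yields $\theta \geq \tilde\theta$. For $m \geq 1$, $\tilde\theta_x = 1 + (m-1)\sin^2\tilde\theta \geq 1$, and a separable integration (equivalently the $\tan$-substitution) gives the period
\begin{equation*}
\int_\alpha^{\alpha+\pi}\frac{d\theta}{1+(m-1)\sin^2\theta} = \frac{\pi}{\sqrt{m}}.
\end{equation*}
Consequently, on $[-a, x_0]$ with $x_0 > -a$, $\tilde\theta$ completes at least $\lfloor (x_0+a)\sqrt{m}/\pi\rfloor$ full windings of length $\pi$, so $\tilde\theta(x_0) \to +\infty$ and so does $\theta(x_0)$.

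The main difficulty lies in the singular-perturbation control required for (iii) and (iv): the coefficient $K(x)$ varies with $x$, the initial value $\theta(-a) = \pi/2$ is fixed, and the limits are only pointwise in the open interval $(-a,a)$. The monotone comparison from (ii) sidesteps the $x$-dependence cleanly, after which the explicit Riccati and tangent substitutions reduce the boundary-layer width estimate in (iii) and the winding-period computation in (iv) to elementary computations.
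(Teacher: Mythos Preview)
Your proof is correct and follows the same architecture as the paper's: the barrier $g(0)=1>0$ for (i), a comparison/monotonicity argument for (ii), and reduction to an autonomous constant-coefficient equation $\bar\theta_x=\cos^2\bar\theta+M\sin^2\bar\theta$ (with $M=\sup K$ or $\inf K$) for the limits (iii)--(iv). The differences are purely in execution. For (ii) the paper invokes a comparison theorem (its Theorem~5) directly on $\theta_{2x}-g_{\nu_1}(\theta_2)\geq 0=\theta_{1x}-g_{\nu_1}(\theta_1)$, whereas you unfold that into the linear equation for $w=\theta_2-\theta_1$ and use variation of constants; these are equivalent. For (iii)--(iv) the paper analyses the autonomous equation qualitatively (locating the fixed point $\bar\theta^*$ with $\sin^2\bar\theta^*=-1/k$ and arguing monotone convergence, and for (iv) merely saying ``same arguments''), while you make the same reduction quantitative via the Riccati substitution $\phi=\cot\tilde\theta$ and the explicit winding period $\pi/\sqrt{m}$. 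Your version gives sharper rates (boundary-layer width $O(1/\sqrt{-M})$, winding count $\sim (x_0+a)\sqrt{m}/\pi$) at no extra cost, which is a mild improvement; conversely the paper's phase-line argument is shorter to state. Either way the content is the same.
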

\begin{proof}
We have
\begin{equation*}
\theta_x=g(\theta)=1+(\frac{f'(\bar{u})+\nu}{d}-1)\sin^2\theta.
\end{equation*}
 Therefore,
 \begin{equation*}
g(0)=1>0.
\end{equation*}
 This implies that we have $\theta_x>0$ for $\theta=0$. Hence, since $\theta(-a)=\frac{\pi}{2}>0$, $\theta(x)$ cannot reach the value $0$ for $x\in [-a,a]$.  This proves the first claim.\\
 For the second claim, we use the theorem \ref{th:comp}. Assume that $\nu_2>\nu_1$, then  if 
 \begin{equation*}
\theta_{2x}=g_{\nu_2}(\theta_{2})= \cos^2\theta_2+\frac{f'(\bar{u})+\nu_2}{d}\sin^2\theta_2,
 \end{equation*}
 and 
  \begin{equation*}
\theta_{1x}=g_{\nu_1}(\theta_{1})= \cos^2\theta_1+\frac{f'(\bar{u})+\nu_1}{d}\sin^2\theta_1,
 \end{equation*}
 we have
  \begin{equation*}
  \begin{array}{rcl}
  \theta_{2x}&=& \cos^2\theta_2+\frac{f'(\bar{u})+\nu_1}{d}\sin^2(\theta_2)+\frac{\nu_2-\nu_1}{d}\sin^2(\theta_2),\\
  &=&g_{\nu_1}(\theta_{2})+\frac{\nu_2-\nu_1}{d}.
  \end{array}
 \end{equation*}
 This implies,
   \begin{equation*}
  \begin{array}{rcl}
  \theta_{2x}-g_{\nu_1}(\theta_{2})&\geq& 0,\\
  &=&\theta_{1x}-g_{\nu_1}(\theta_{1}).
  \end{array}
 \end{equation*}
Hence, by application of theorem \ref{th:comp}, we obtain 
\[\theta_{2}> \theta_1,\forall x \in (-a,a).\] 
 Let $k>\frac{f'(\bar{u})+\nu}{d}-1$. Then, if $\bar{\theta}$ is a solution of,
\begin{equation}
\label{eq:upper}
\bar{\theta}_x=1+k\sin^2\bar{\theta},
\end{equation} 
then $\bar{\theta}$ verify
\begin{equation*}
\begin{array}{rcl}
\bar{\theta}_x-g(\bar{\theta})&=&\bar{\theta}_x-(1+k\sin^2\bar{\theta})+(1+k\sin^2\bar{\theta}-g(\bar{\theta})\\
&=&(k-(\frac{f'(\bar{u})+\nu}{d}-1))\sin^2(\bar{\theta})\\
&\geq&0\\
&=&\theta_x-g(\theta).
\end{array}
\end{equation*}
Therefore, again by theorem  \ref{th:comp}, we obtain:
\[\bar{\theta}> \theta, \forall x \in (-a,a).\] 
 Now, for fixed $x\in (-a,a)$, and fixed $\gamma>0$ small, there exists $k$ such that $\bar{\theta}(x)<\gamma$. Then for  $\nu$ satisfying $\frac{f'(\bar{u})+\nu}{d}-1<k$ we have $0<\theta(x)<\bar{\theta}(x)<\gamma$.
Indeed, this follows by the following arguments. We can analyze the qualitative behavior of solution of \eqref{eq:upper}  which is a one dimensional autonomous ODE. 
We assume $k<0$, the  stationary point is a solution of:
\[\sin^2(\bar{\theta})=-\frac{1}{k}.\]
Let $\bar{\theta}^*$ the steady state solution of this last equation belonging to $(0,\frac{\pi}{2})$.
Since we start with $\bar{\theta}(-a)=\frac{\pi}{2}$, we have for $k<0$ small enough, $\bar{\theta}_x(-a)<0$, and this remains true as long as   $\bar{\theta}(x)> \bar{\theta}^*$. This means that  $\bar{\theta}(x)$ decreases and converges towards $\bar{\theta}^*$ as $x$ tends to $+\infty$. Furthermore, for fixed $x \in (-a,a)$, for fixed $\gamma$, one can choose $k<0$ small enough such that  $\bar{\theta}(x)\leq \gamma$. It is indeed sufficient to ensure for example
\[\frac{\pi}{2}+(1+k\sin^2(\gamma))(x+a)=\gamma.\]
  Since $\bar{\theta}(x)$ is an upper solution, this shows our third claim. The last claim follows from same arguments: for all $x \in (-a,a)$, for all $\gamma>0$ (large), there exists $k$, such that $\bar{\theta}(x)>\gamma$. Then for $\nu$ satisfying $\frac{f'(\bar{u})+\nu}{d}-1>k$ we have $0<\gamma<\bar{\theta}(x)<\theta(x)$.

\end{proof}

\begin{figure}
\begin{center}
\includegraphics[scale=0.2]{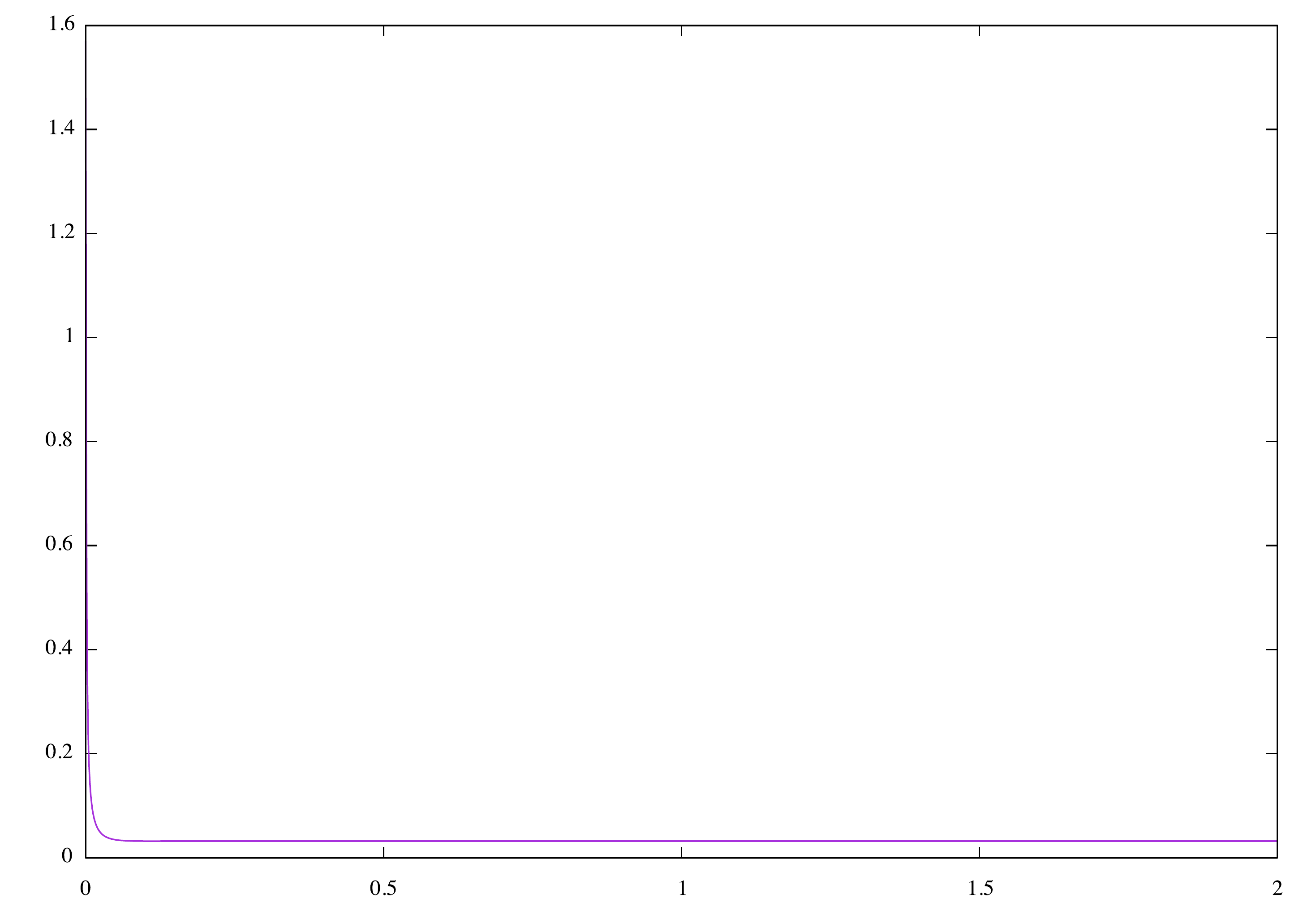}
\caption{Solution of equation \eqref{eq:upper} for $k=-1000$ and  $\bar{\theta}(0)=\frac{\pi}{2}$}
\end{center}
\end{figure}

We now will prove the following theorem,
\begin{Theorem}
For $p$ small enough, the linearized operator $\mathcal{F}$ has at least one eigenvalue with positive real part. For $p$ large enough, all the eigenvalues of the linearized system have negative real part. There is an Hopf Bifurcation:  there exists a value $p_0$ for which as $p$ crosses $p_0$ from right to left,  the real part of a conjugate complex eigenvalues increases from negative to positive. The other eigenvalues remaining with negative real parts.
\end{Theorem}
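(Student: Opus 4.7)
The plan is to reduce each of the three claims to statements about the least eigenvalue $\nu_0(p)$ of the Sturm--Liouville problem \eqref{eq:SL}, via the formula $\lambda^\pm=(-\nu\pm\sqrt{\nu^2-4\epsilon})/(2\epsilon)$. Since $\reel\lambda^\pm$ has the sign opposite to $\nu$ (and vanishes exactly at $\nu=0$), and the $\nu_n$ form a strictly increasing sequence, every eigenvalue of $\mathcal{F}$ has negative real part iff $\nu_0(p)>0$, and at least one has positive real part iff $\nu_0(p)<0$. The first claim then follows directly from Proposition \ref{Prop:solinst}: by hypothesis (6) and the regularity of $c$, $\bar u(\cdot)=c(\cdot)\to 0$ uniformly on $[-a,a]$ as $p\to 0$, so $f'(\bar u)=3-3\bar u^2\to 3$ uniformly, giving $\int_\Omega f'(\bar u)\,dx>0$ for $p$ small enough.

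For the second claim I would show $\nu_0(p)\to+\infty$ as $p\to+\infty$ by a concentration--compactness contradiction. Assume $\nu_0(p_k)\le C$ along some $p_k\to+\infty$, and let $u_k$ be associated normalized minimizers, so that
\[
d\int u_{k,x}^2\,dx+\int\bigl(3c(x)^2-3\bigr)u_k^2\,dx\le C.
\]
Both $\int u_{k,x}^2$ and $\int c(x)^2 u_k^2$ are then bounded, so $(u_k)$ is bounded in $H^1$. Upgrading hypothesis (7) together with the monotonicity (5) and regularity of $c$ to a uniform divergence $\inf_{|x|\ge\delta} c(x)^2\to+\infty$, the bound on $\int c^2 u_k^2$ forces $\int_{|x|\ge\delta}u_k^2\to 0$ for every $\delta>0$. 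By Rellich--Kondrachov a subsequence of $(u_k)$ converges in $L^2$ to some $u^*$ with $|u^*|_{L^2}=1$, yet the concentration forces $u^*=0$ almost everywhere, a contradiction. Hence $\nu_0(p)>0$ eventually and every eigenvalue of $\mathcal{F}$ has negative real part.

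For the Hopf bifurcation, strict monotonicity of $p\mapsto\nu_0(p)$ follows from (5): since $c(x)$ is strictly decreasing in $p$ for $x\neq 0$, the coefficient $f'(\bar u)=3-3c^2$ is strictly decreasing in $p$, so the Rayleigh functional in Theorem \ref{th:spec} is strictly increasing in $p$ and so is its infimum. Continuity of $\nu_0$ in $p$ is standard regular Sturm--Liouville perturbation theory. Combined with the first two claims, the intermediate value theorem produces a unique $p_0$ with $\nu_0(p_0)=0$. At $p_0$, $\lambda_0^\pm=\pm i/\sqrt{\epsilon}$ is a simple purely imaginary pair (inherited from the simplicity of $\nu_0$ for a regular Sturm--Liouville problem), all $\nu_n(p_0)>0$ for $n\ge 1$ give $\lambda_n^\pm$ with strictly negative real part, and the transversality $\frac{d}{dp}\reel\lambda_0^\pm(p_0)=-\nu_0'(p_0)/(2\epsilon)<0$ follows from the strict Rayleigh monotonicity via a Hellmann--Feynman computation. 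This is exactly the Hopf scenario as $p$ crosses $p_0$ from right to left. The main technical obstacle is the concentration--compactness step above: one needs to carefully upgrade the pointwise hypothesis (7) to the uniform divergence of $c^2$ away from the origin, after which the intermediate value and transversality steps are routine.
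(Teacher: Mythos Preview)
Your argument is correct and follows a genuinely different route from the paper. The paper works entirely through the Pr\"ufer angle: after the polar change of variables \eqref{chv}, it studies the scalar Cauchy problem \eqref{eq:theta} for $\theta$ with $\theta(-a)=\pi/2$, and characterizes $\nu_0$ as the smallest $\nu$ for which $\theta(a)=\pi/2$. For the large-$p$ claim it builds an explicit piecewise linear upper solution $w$ for $\theta$ at $\nu=0$ (with positive slope $3/d$ near $0$ and negative slope $-\alpha$ away from $0$, then smoothed), and uses the ODE comparison Theorem \ref{th:uppersubsol} to force $\theta(a)<\pi/2$, hence $\nu_0>0$. For monotonicity it again applies Theorem \ref{th:comp} to the $\theta$-equation, comparing the flows for $p_1>p_2$ directly. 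Your approach instead stays at the level of the Rayleigh quotient: concentration--compactness replaces the handmade upper solution (and in fact yields the stronger $\nu_0(p)\to+\infty$), pointwise monotonicity of the potential $-f'(\bar u)=3c^2-3$ gives monotonicity of $\nu_0$ in one line, and Hellmann--Feynman gives transversality. Your method is cleaner and uses only standard variational/spectral tools; the paper's Pr\"ufer framework is more elementary (no compactness), and its oscillation machinery would extend more readily to locating higher $\nu_n$. Two small remarks: the uniform divergence you flag as the ``main technical obstacle'' follows immediately from the shape hypotheses, since $\sup_{|x|\ge\delta}c_p(x)=\max\{c_p(-\delta),c_p(\delta)\}\to-\infty$; and your strict monotonicity and transversality tacitly use that $c$ is \emph{strictly} decreasing in $p$ for $x\neq0$, which the paper's proof also needs (at $x=-a$) though hypothesis (5) is only stated non-strictly.
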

\begin{proof}
 We prove that:
\begin{enumerate}
\item for $p$ large enough, $\nu_0>0$, for $p$ small enough, $\nu_0<0$,
\item $\nu_0$ is an increasing function of $p$. 
\end{enumerate}
We start with the first step. \\
For $p$ small enough, i.e. close enough to $0$, $f'(0)=3>0$ implies  $f'(\bar{u}(x))> 0$ over $[-a,a]$ and the computation in the proof of proposition \ref{Prop:solinst} allows to conclude that $\nu_0<0$ in this case.
Now, we deal with equation \eqref{eq:theta}, with:
\[\theta(-a)=\frac{\pi}{2},\, \nu=0.\]
We prove that for $p$ large enough:
\begin{equation}
\label{eq:th(a)<pisur2}
\theta(a)<\frac{\pi}{2}.
\end{equation}
Since  $\theta$ is an increasing function of $\nu$, this implies that, for such a value of $p$, we have $\nu_0>0$.
In order to prove \eqref{eq:th(a)<pisur2}, we will find an upper solution $w$ of equation \eqref{eq:theta} such that: $w(a)<\frac{\pi}{2}$. By theorem \ref{th:uppersubsol} this implies that $\theta(a)<\frac{\pi}{2}$.
\begin{equation}
\label{eq:negfp}
\forall \mu>0, \forall \gamma<0, \exists p_0; \, p>p_0 \Rightarrow f'(\bar{u}(x))=f'(c(x))<\gamma \, \forall x \in [-a,-\mu]\cup[\mu,a], 
\end{equation}
and,
\begin{equation}
\label{eq:maxfp}
\frac{f'(\bar{u}(x))}{d}<\frac{3}{d} \, \forall x \in [-a,a].
\end{equation}

These two statements will allow us to construct the upper solution $w$. We first construct a piecewise linear function $w$ and then slightly modify it in order to have a $C^1$ function. The idea is to choose $w$ with a negative slope outside a small neighborhood of the origin, and with a positive slope within this neighborhood. We want the slopes to ensure that $0<w(a)<\frac{\pi}{2}$. Thanks to \eqref{eq:negfp} and \eqref{eq:maxfp},  this will ensure that $g(w)<w'$.
Let $\epsilon_1>0$ and let $w$ a continuous function such that $w(-a)=\frac{\pi}{2}$ and
\begin{equation*}
w(x)=\left\{
\begin{array}{rl}
\frac{\pi}{2}-\alpha (x+a)  &\mbox{ if } x \in (-a,-\epsilon_1),\\
\frac{\pi}{2}-\alpha (-\epsilon_1 +a)+ \frac{3}{d}(x+\epsilon_1) &\mbox{ si } x \in (-\epsilon_1,\epsilon_1),\\
\frac{\pi}{2}-\alpha (-\epsilon_1 +a)+ (1+\frac{3}{d})2\epsilon_1-\alpha (x-\epsilon_1) &\mbox{ if } x \in (\epsilon_1,a).\\
\end{array}
\right. 
\end{equation*} 
This means that $w$ is a continuous piecewise linear function with a negative slope $-\alpha$ for $x \in [-a,-\epsilon_1]\cup[\epsilon_1,a]$, and with a positive slope $\frac{3}{d}$ for $x \in [-\epsilon_1,\epsilon_1]$.
Moreover, we choose $\alpha$ and $\epsilon_1$ such that
\[\frac{\pi}{2}-2a\alpha >0,\]
which is equivalent to 
\[\alpha<\frac{\pi}{4a}.\]
This ensures that $w>0$ over $[-a,a]$.
Also, we choose,
\[-\alpha (-\epsilon_1 +a)+ \frac{3}{d}(2\epsilon_1)<0,\]
which is equivalent to:
\[\alpha > \frac{3}{d}\frac{2\epsilon_1}{a-\epsilon_1}.\]
This is always possible as soon as $\epsilon_1$ is small enough and ensures $w(x)<\frac{\pi}{2}$ over $(-a,a]$.

Then, in order to obtain a $C^1$ function, we slightly modify $w$, we set:
\begin{equation}
\begin{array}{c}
\tilde{w}(-a)  =w(-a),\\
\tilde{w}'(x)= w'(x) \mbox{ on } [-a,-\epsilon_1[\cup [-\epsilon_2,\epsilon_2] \cup [\epsilon_1,a], \epsilon_2<\epsilon_1,\\
\tilde{w}'(x)= -\alpha+\frac{\frac{3}{d}+\alpha}{-\epsilon_2+\epsilon_1}(x+\epsilon_1) \mbox{ on } [-\epsilon_1,-\epsilon_2],\\
\tilde{w}'(x)= -\alpha+\frac{\frac{3}{d}+\alpha}{\epsilon_2-\epsilon_1}(x-\epsilon_1) \mbox{ sur } [\epsilon_2,\epsilon_1],\\
\end{array}
\end{equation} 
with $\epsilon_2$ close enough to $\epsilon_1$.
For sake of simplicity, we rename $\tilde{w}, w$.
Then for $p$ large enough, we have:
\[w'>g(w).\]
Indeed, for $p$ large enough , $f'(\bar{u}(x))<0$ on $[-a,-\epsilon_2]\cup [\epsilon_2,a]$. Then, for all $x\in [-a,-\epsilon_2[ \cup [\epsilon_2,a]$:
\[g(w)<1+(\frac{f'(\bar{u}(x))}{d}-1)\inf_{x\in [-a,a]}\sin^2(w(x)).\]
This follows from the fact that $\sin^2(w(x))>\sin^2(w(a))>0$.
Then, for $p$ large enough,
\begin{equation}
\begin{array}{rl}
g(w)<-\alpha\leq w'  \mbox{ over }  [-a,-\epsilon_2[ \cup [\epsilon_2,a],\\
g(w)\leq \frac{3}{d}=w' \mbox{ over }  [-\epsilon_2,\epsilon_2]. \\
\end{array}
\end{equation} 
This shows that $w$ is an upper solution of \eqref{eq:theta}, therefore $\theta<w$. It follows that, $\theta(a)<w(a)<\frac{\pi}{2}$.
Therefore $\nu_0>0$ and all eigenvalues have a negative real part.
Now, we prove that $\nu_0$ is an increasing function of $p$. Since  $\theta(a)$ is an increasing function of $\nu$, it is sufficient to show that $\theta(a)$ is a decreasing function of $p$.  
Let $p_1>p_2$ and let us denote by $\theta_1, g_1$ (resp $\theta_2, g_2$) the solution and the $g$ function associated with $p_1$  (resp $p_2)$, we have:
\begin{align*}
\dot{\theta}_1-g_1(\theta_1)=&0,
\end{align*}
and
\begin{align*}
\dot{\theta}_2-g_1(\theta_2)&=\dot{\theta}_2-(\cos^2(\theta_2)+\frac{f'(\bar{u}_1)+\nu}{d}\sin^2(\theta_2))\\
&=\dot{\theta}_2-(\cos^2(\theta_2)+\frac{f'(\bar{u}_1)-f'(\bar{u}_2)+f'(\bar{u}_2)+\nu}{d}\sin^2(\theta_2))\\
&=-(\frac{f'(\bar{u}_1)-f'(\bar{u}_2))}{d}\sin^2(\theta_2))\\
&\geq 0.
\end{align*}
Therefore,
\[\dot{\theta}_1-g_1(\theta_1)\leq \dot{\theta}_2-g_1(\theta_2).\]
Furthermore,
\[\dot{\theta}_1(-a)<\dot{\theta}_2(-a)\]
which by theorem \ref{th:comp} implies that
\[\theta_2(x)>\theta_1(x) \mbox{ on } (-a,a],\]
This concludes the proof.
\end{proof}

\section{Application of the center manifold theorem}
In this section, in order to compute the equation for the center manifold, we formally apply the procedure described in \cite{Kuz98}. The theoretical analysis of the phenomenon using the framework of \cite{Carr82,Hen81,Lun95} is left for a forthcoming article. Let us rewrite \eqref{FHNRD1D} as
\begin{equation}
\label{eq:FG}
(u,v)_t=\mathcal{F}(u,v)+\mathcal{G}(u,v),
\end{equation}
where $\mathcal{F}$ is the linear operator defined in the previous part,

\begin{equation*}
\label{Op2}
\mathcal{F}(u,v)=
 \left \{ 
     \begin{array}{l}
      \frac{1}{\epsilon}\big(f'(\bar{u})u-v+d u_{xx}\big), \\
       u,\\
     \end{array}
      \right. 
\end{equation*}
and $\mathcal{G}$ is the nonlinear remaining part,
\begin{equation*}
\label{NL}
\mathcal{G}(u,v)=
 \left \{ 
     \begin{array}{l}
      \frac{1}{\epsilon}\big(\frac{1}{2}f''(\bar{u})u^2+\frac{1}{6}f''(\bar{u})u^3)= \frac{1}{\epsilon}(-u^3-3\bar{u}u^2)), \\
       0.\\
     \end{array}
      \right. 
\end{equation*}
 Let $\phi$ denote the dynamical system generated by equation \eqref{eq:FG}  on $X\times X$, and let $u_0$ the eigenfunction associated to $\lambda_0$ as defined in theorem \ref{th:spec}.
\begin{Theorem}
Let
\[T^c=u_0(x)Vect\{(1,0),(0,1)\}.\]
There is a locally defined
smooth two-dimensional  invariant manifold $W^c\subset X\times X$ that is tangent
to $T^c$ at  $0$.
Moreover, there is a neighborhood $U$ of $(0,0)$ in $X\times X$, such that if $\phi(t)(u,v) \in U$ for all $t\geq 0$, then $\phi(t)(u,v)\rightarrow W^c$
as $t \rightarrow +\infty$. The equation on the manifold can be approximated  by the complex equation 
\begin{equation}
\label{eq:z2}
z_t=\displaystyle \lambda_1z-\big{(}\frac{3}{C}\int_{\Omega}\bar{u}u_0^3dx\big{)}(z+\bar{z})^2-\frac{3}{C}\big{(}\int_{\Omega}u_0^2\bar{u}w_{20}^1dx+\int_{\Omega}u_0^4dx\big{)}z^2\bar{z}+...
\end{equation}
where $C=2\epsilon\int_\Omega u_0^2dx$,
whereas the first Lyapunov coefficient of the Hopf bifurcation is given by:
\begin{align*}
l_1(0)&=-\frac{3\sqrt{\epsilon}}{2C}(\int_\Omega u_0^4+\int_\Omega\bar{u}u_0^2Re(w^1_{20})),
\end{align*}
with 
\begin{equation*}
(\frac{3}{2}i\sqrt{\epsilon}-f'(\bar{u}))w^1_{20}-(w^1_{20})_{xx}=-6
\bar{u}u_0^2+12\frac{\epsilon}{C}u_0 \int_\Omega \bar{u}u_0^3dx.
\end{equation*}
\end{Theorem}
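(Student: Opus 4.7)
The plan is to split the argument into three stages: first, invoke an abstract center manifold theorem to produce $W^c$ with the claimed tangency and local attractivity; second, parametrize $W^c$ by a complex coordinate $z$ obtained by projecting onto the critical eigenspace, and derive the homological equation that determines the quadratic correction $w^1_{20}$; third, read off the Poincaré normal-form coefficients and apply the standard first-Lyapunov-coefficient formula to recover $l_1(0)$.

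For the existence and attractivity of $W^c$, I would verify the spectral hypotheses of the theorems in \cite{Carr82,Hen81} at the critical value $p_0$ produced in the previous theorem. At $p_0$ one has $\nu_0=0$, so by \eqref{eq:lam0} the pair $\lambda_0^{\pm}=\pm i/\sqrt{\epsilon}$ sits on the imaginary axis, while the Sturm--Liouville sequence $\nu_n\to+\infty$ forces all other eigenvalues of $\mathcal{F}$ to lie in the open left half-plane with a uniform gap from the imaginary axis. Since $\mathcal{G}$ is smooth with $\mathcal{G}(0)=0$ and $D\mathcal{G}(0)=0$, the standard center manifold construction applies and yields the two-dimensional manifold $W^c$ tangent to $T^c$ at $0$, together with the local exponential attractivity of orbits of $\phi$ that remain in a neighborhood $U$ of the origin.

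For the reduction, set $\omega=1/\sqrt{\epsilon}$ and take $q=(u_0,-i\sqrt{\epsilon}\,u_0)$, which satisfies $\mathcal{F}q=i\omega q$ directly from \eqref{eq:Vp3}. I would then compute the adjoint eigenvector $p$ of $\mathcal{F}^{*}$ corresponding to $-i\omega$; a short calculation shows that the normalization $\langle p,q\rangle=1$ introduces exactly the constant $C=2\epsilon\int_\Omega u_0^2\,dx$. Writing points of $W^c$ as $(u,v)=zq+\bar z\bar q+w(z,\bar z)$ with the correction
\[
w(z,\bar z)=\tfrac{1}{2}w_{20}z^2+w_{11}|z|^2+\tfrac{1}{2}\bar{w}_{20}\bar z^2+O(|z|^3)
\]
in the spectral complement, the invariance identity matched order by order yields equations of the form $(2i\omega I-\mathcal{F})w_{20}=Q_{20}$ and $(-\mathcal{F})w_{11}=Q_{11}$, where the $Q_{ij}$ are the Taylor coefficients of $\mathcal{G}(zq+\bar z\bar q)$ adjusted by the projection onto $\ker(2i\omega I-\mathcal{F})$. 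Inserting $\mathcal{G}(u,v)=\bigl(-\tfrac{1}{\epsilon}(u^3+3\bar u u^2),0\bigr)$ and extracting the first component reproduces the scalar PDE stated for $w^1_{20}$, with the inhomogeneity $-6\bar u u_0^2+(12\epsilon/C)u_0\int_\Omega \bar u u_0^3\,dx$ arising from the $z^2$ coefficient of $-3\bar u u^2/\epsilon$ minus its projection onto $q$.

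For the normal form and Lyapunov coefficient, I would substitute $w_{20},w_{11}$ back into the invariance equation, project the resulting cubic remainder onto $p$, and read off the coefficients $g_{20},g_{11},g_{02},g_{21}$ of the Poincaré normal form $\dot z=\lambda_1(p)z+\sum_{i+j\geq 2}g_{ij}z^i\bar z^j/(i!\,j!)$. The quadratic integrals come out proportional to $\int_\Omega \bar u u_0^3\,dx$, matching \eqref{eq:z2}, while $g_{21}$ combines the diagonal $\int_\Omega u_0^4\,dx$ with the coupling $\int_\Omega \bar u u_0^2 w^1_{20}\,dx$ coming from the mixed term $-6\bar u u\cdot w^1/\epsilon$. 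Kuznetsov's formula for $l_1(0)$ in terms of $g_{20},g_{11},g_{21}$ then produces the stated expression, the $g_{20}g_{11}$ cross-term contributing only to the imaginary part because $g_{20}$ and $g_{11}$ are real multiples of the same integral. The principal obstacle is organizational rather than conceptual: keeping the $1/\epsilon$ prefactors, the imaginary second component of $q$, and the normalization of $p$ in sync, so that the $w^1_{20}$ equation and the Lyapunov coefficient come out with precisely the constants displayed in the statement.
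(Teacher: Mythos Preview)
Your proposal is correct and follows essentially the same route as the paper: both set $q=u_0(x)(1,-i\sqrt{\epsilon})$, compute the adjoint eigenvector so that the normalization produces $C=2\epsilon\int_\Omega u_0^2\,dx$, decompose $\xi=zq+\bar z\bar q+y$, derive the homological equations $(2\lambda_1 I-\mathcal{F})w_{20}=H$, $-\mathcal{F}w_{11}=H$, and then apply Kuznetsov's formula $l_1(0)=\frac{1}{2\omega_0^2}\mathrm{Re}(ig_{20}g_{11}+\omega_0 g_{21})$, with the $ig_{20}g_{11}$ term dropping out because both factors are real. The only noteworthy difference is that you propose to actually verify the spectral-gap hypotheses of \cite{Carr82,Hen81}, whereas the paper explicitly treats the existence and attractivity of $W^c$ as a formal application of \cite{Kuz98} and defers the rigorous semigroup analysis to future work.
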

\begin{proof}
We define on the complexification of $X \times X$,  the following scalar product:
\[((u_1,v_1),(u_2,v_2))=\epsilon\int_{\Omega}\bar{u}_1u_2+\int_{\Omega}\bar{v}_1v_2.\]
Then,  after a simple computation, we find that the adjoint operator $\mathcal{F}^t$ of  $\mathcal{F}$ is given by:
\begin{center}
\begin{equation}
\label{Opt}
\mathcal{F}^t(u,v)=
 \left \{ 
     \begin{array}{l}
      \frac{1}{\epsilon}\big(f'(\bar{u})u+v+du_{xx}\big), \\
       -u.\\
     \end{array}
      \right. 
\end{equation}
\end{center}
At the Hopf birfucation parameter value $p=p_0$, the operator $\mathcal{F}$ has two purely complex conjugate eigenvalues $\lambda_1$ and $\lambda_2$, the others being of negative real part.
By putting $\nu_0=0$ in \eqref{eq:lam0}, we obtain:
\[\lambda_1=\frac{i}{\sqrt{\epsilon}}, \ \lambda_2=-\frac{i}{\sqrt{\epsilon}}. \]
Let us denote by $q$ the  eigenvector associated with $\lambda_1$. It follows easily from computation before theorem \ref{th:spec} that we can choose the first component of $q$,  $q_1(x)=u_0(x)$. Then, from \eqref{eq:Vp3}, we deduce that  its second component $q_2(x)=\frac{u_0(x)}{\lambda_1}=-i\sqrt{\epsilon}u_0(x)$. Therefore, we have, 
\[q(x)=u_0(x)\begin{pmatrix}
1\\
-i\sqrt{\epsilon}
\end{pmatrix}.\]
By computation, we see that the eigenvalues of $\mathcal{F}^t$ are the same as those of $\mathcal{F}$. Let $\tilde{p}$ the eigenvector of $\mathcal{A}^t$ associated to $\lambda_2$. After a short computation, we find:
\[\tilde{p}(x)=q(x).\]
Furthermore,
\[(\tilde{p},q)=2\epsilon\int_\Omega u_0^2dx.\]
Let
\[p=\frac{1}{2\epsilon\int_{\Omega}u_0^2dx}\tilde{p}.\]
Then:
\[(p,q)=1.\]
Let
\[T^c=u_0(x)Vect\{(1,0),(0,1)\}=u_0(x)Vect\{re(q),im(q)\},\]
and let
\[T^{su}\mbox{ be the orthogonal space of } T^c \mbox{ in } X\times X.\]
Let $\xi=(u,v)$. We set:
\[\xi=zq+\bar{z}\bar{q}+y\]
with $y\in T^{su}$. Then, we can verify that $zq+\bar{z}\bar{q}$ is the orthogonal projection of $\xi$ on $T^c$, and $z$, $\bar{z}$ are unique.
We also verify by computation that:
\begin{equation}
\label{eq:pqbar=0}
(p,\bar{q})=0.
\end{equation}
This implies that:
\begin{equation}
\label{eq:py=0}
y\in T^{su} \Leftrightarrow (p,y)=0.
\end{equation}
Indeed, 
\begin{equation*}
\begin{array}{rcl}
y\in T^{su}&\Leftrightarrow & \forall z \in \C, (zq+\bar{z}\bar{q},y)=0,\\
&\Leftrightarrow & \forall z \in \C,  (zq,y)=0,\\
&\Leftrightarrow &\forall z \in \C, (zp,y)=0,\\
&\Leftrightarrow &(p,y)=0.
\end{array}
\end{equation*}
It follows that:
\begin{equation}
\label{eq:z=y=}
\left\{\begin{array}{rcl}
z&=&(p,\xi),\\
y&=&\xi-(p,\xi)q-(\bar{p},\xi)\bar{q}.
\end{array}
\right.
\end{equation}
We derivate equation \eqref{eq:z=y=} with respect to time. We obtain,
\begin{equation}
\label{eq:zt=yt=}
\left\{\begin{array}{rcl}
z_t&=&(p,\xi_t),\\
y&=&\xi_t-(p,\xi_t)q-(\bar{p},\xi_t)\bar{q}.
\end{array}
\right.
\end{equation}
Using the fact that:
\begin{equation}
\xi_t=\lambda_1zq+\bar{\lambda}_1\bar{z}\bar{q}+\mathcal{G}(\xi),
\end{equation}
and by \eqref{eq:pqbar=0}, \eqref{eq:py=0}, we obtain after some computations that,
\[\left\{\begin{array}{rcl}
z_t&=&\lambda_1z+(p,\mathcal(G)(zq+\bar{z}\bar{q}+y)),\\
y_t&=&\mathcal{F}(y)+\mathcal{G}(zq+\bar{z}\bar{q}+y)-(p,\mathcal(G)(zq+\bar{z}\bar{q}+y))q-(\bar{p},\mathcal(G)(zq+\bar{z}\bar{q}+y))\bar{q},
\end{array}
\right.
\]
where we have used that $\mathcal{F}(y)\in T^{su}$.
Here, we have,
\begin{equation*}
\begin{array}{rcl}
\mathcal{G}(zq+\bar{z}\bar{q}+y)&=&\begin{pmatrix}
-\frac{1}{\epsilon}(3\bar{u}(zq_1+\bar{z}\bar{q}_1+y_1)^2+(zq_1+\bar{z}\bar{q}_1+y_1)^3)\\
0
\end{pmatrix}\\
&=&\begin{pmatrix}
-\frac{1}{\epsilon}(3\bar{u}((z+\bar{z})u_0+y_1)^2+((z+\bar{z})u_0+y_1)^3)\\
0
\end{pmatrix}.
\end{array}
\end{equation*}
Therefore,
\begin{equation*}
(p,\mathcal{G}(zq+\bar{z}\bar{q}+y))=-\frac{1}{C}\int_{\Omega}u_0(3\bar{u}((z+\bar{z})u_0+y_1)^2+((z+\bar{z})u_0+y_1)^3)dx,
\end{equation*}
where 
\begin{equation*}
C=2\epsilon\int_\Omega u_0^2dx.
\end{equation*}
Hence, we obtain,
\begin{equation}
\label{eq:zt}
z_t=\lambda_1z-\frac{1}{C}\int_{\Omega}u_0(3\bar{u}((z+\bar{z})u_0+y_1)^2+((z+\bar{z})u_0+y_1)^3)dx.
\end{equation}
Also, since the second coordinate of $\mathcal{G}$ is zero, we have,
\begin{equation*}
(\bar{p},\mathcal{G}(zq+\bar{z}\bar{q}+y))=(p,\mathcal{G}(zq+\bar{z}\bar{q}+y)).
\end{equation*}
Therefore,
\begin{equation*}
\begin{array}{rcl}
(p,\mathcal{G}(zq+\bar{z}\bar{q}+y))q+(\bar{p},\mathcal{G}(zq+\bar{z}\bar{q}+y))\bar{q}&=&(p,\mathcal{G}(zq+\bar{z}\bar{q}+y))(q+\bar{q}),\\
&=&(p,\mathcal{G}(zq+\bar{z}\bar{q}+y))\begin{pmatrix}
2u_0\\
0
\end{pmatrix},
\end{array}
\end{equation*}
from which we deduce that,
\begin{equation}
\label{eq:yt}
\begin{array}{rcl}
y_t&=&\displaystyle \mathcal{F}(y)+\begin{pmatrix}
-\frac{1}{\epsilon}(3\bar{u}((z+\bar{z})u_0+y_1)^2+((z+\bar{z})u_0+y_1)^3)\\
0
\end{pmatrix}\\
& &+\displaystyle \-\frac{1}{C}\int_{\Omega}u_0(3\bar{u}((z+\bar{z})u_0+y_1)^2+((z+\bar{z})u_0+y_1)^3)dx
\begin{pmatrix}
2u_0\\
0
\end{pmatrix}.
\end{array}
\end{equation}
It follows from the center manifold theorem, see \cite{Kuz98}, that we can write,
\begin{equation}
\label{eq:vc}
y=\frac{w_{20}}{2}z^2+w_{11}z\bar{z}+\frac{w_{02}}{2}\bar{z}^2+O(|z|^3),
\end{equation}
where  $w_{20},w_{11},w_{02}$, are to be determined later.
Following the procedure in \cite{Kuz98},  for the nonlinear terms in equation \eqref{eq:zt},
 we only write those with $(z+\bar{z})^2$ and $z^2\bar{z}$. We obtain,
\begin{equation}
\label{eq:z}
\begin{array}{rcl}
z_t&=&\displaystyle \lambda_1z-\frac{1}{C}\int_{\Omega}u_0(3\bar{u}((z+\bar{z})u_0+y_1)^2+((z+\bar{z})u_0+y_1)^3)dx,\\
&=&\displaystyle\lambda_1z-\frac{1}{C}\int_{\Omega}u_03\bar{u}(z+\bar{z})^2u_0^2dx-\frac{6}{C}\int_{\Omega}u_0^2\bar{u}(z+\bar{z})y_1dx-\frac{3}{C}\int_{\Omega}u_0^4z^2\bar{z}dx,\\
&=&\displaystyle\lambda_1z-\frac{3}{C}(z+\bar{z})^2\int_{\Omega}\bar{u}u_0^3dx-\frac{3}{C}z^2\bar{z}\int_{\Omega}u_0^2\bar{u}w_{20}^1dx-\frac{6}{C}z^2\bar{z}\int_{\Omega}u_0^2\bar{u}w_{11}^1dx-\frac{3}{C}z^2\bar{z}\int_{\Omega}u_0^4dx+...
\end{array}
\end{equation}
For equation \eqref{eq:yt}, we only write the terms with order up to $2$,
\begin{equation}
\label{eq:y}
y_t=\mathcal{F}(y)-\frac{3}{\epsilon}(z+\bar{z})^2\begin{pmatrix}
\bar{u}u_0^2\\
0
\end{pmatrix}
+\frac{6}{C}(z+\bar{z})^2 \int_\Omega \bar{u}u_0^3
\begin{pmatrix}
u_0\\
0
\end{pmatrix}
+O(|z|^3).
\end{equation}

We derive \eqref{eq:vc}, we obtain:
\begin{equation}
\label{eq:ytred}
y_t=\lambda_1w_{20}z^2+\lambda_1w_{02}\bar{z}^2+O(|z|^3).
\end{equation}
Identifying with\eqref{eq:y}, we obtain:
\[\left\{\begin{array}{rcl}
(2\lambda_1Id-\mathcal{A})w_{20}&=&H\\
-\mathcal{A}w_{11}&=&H\\
(2\bar{\lambda}_1Id-\mathcal{A})w_{02}&=&H
\end{array}
\right.
\]
with:
\begin{equation*}
H=-\frac{6}{\epsilon}\begin{pmatrix}
\bar{u}u_0^2\\
0
\end{pmatrix}
+\frac{12}{C} \int_\Omega \bar{u}u_0^3
\begin{pmatrix}
u_0\\
0
\end{pmatrix}.
\end{equation*}
This gives,
\[\left\{\begin{array}{rcl}
(2\epsilon\lambda_1-f'(\bar{u})+\frac{1}{2\lambda_1})w^1_{20}-d(w^1_{20})_{xx}&=&\epsilon H^1,\\
w^2_{20}&=&\frac{w^1_{20}}{2\lambda_1},
\end{array}
\right.
\]
\[\left\{\begin{array}{rcl}
w^1_{11}&=&0,\\
w^2_{11}&=&\epsilon H^1,\\
\end{array}
\right.
\]
\[\left\{\begin{array}{rcl}
(2\epsilon\bar{\lambda}_1-f'(\bar{u})+\frac{1}{2\bar{\lambda}_1})w^1_{02}-d(w^1_{02})_{xx}&=&\epsilon H^1,\\
w^2_{02}&=&\frac{w^1_{02}}{2\bar{\lambda}_1}.
\end{array}
\right.
\]
We rewrite in equation \eqref{eq:z}, we obtain:
\begin{equation}
\label{eq:z2}
z_t=\displaystyle \lambda_1z-\big{(}\frac{3}{C}\int_{\Omega}\bar{u}u_0^3dx\big{)}(z+\bar{z})^2-\frac{3}{C}\big{(}\int_{\Omega}u_0^2\bar{u}w_{20}^1dx+\int_{\Omega}u_0^4dx\big{)}z^2\bar{z}+...
\end{equation}
Then, the first Lyapunov coefficient of the Hopf bifurcation is given by:
\begin{align*}
l_1(0)&=\frac{1}{2\omega_0^2}Re(ig_{20}g_{11}+\omega_0g_{21}),
\end{align*}
where,
\begin{align*}
g_{20}&=-\frac{3}{C}\int_{\Omega}\bar{u}u_0^3dx,\\
g_{11}&=-\frac{6}{C}\int_{\Omega}\bar{u}u_0^3dx,\\
g_{21}&=-\frac{3}{C}\big{(}\int_{\Omega}u_0^2\bar{u}w_{20}^1dx+\int_{\Omega}u_0^4dx\big{)},\\
\omega_0&=\frac{1}{\sqrt{\epsilon}}.
\end{align*}
Hence, we find,
\begin{align*}
l_1(0)&=-\frac{3\sqrt{\epsilon}}{2C}(\int_\Omega u_0^4+\int_\Omega\bar{u}u_0^2Re(w^1_{20}))
\end{align*}

\end{proof}

\section{Numerical simulations}
For the numerical simulations, we choose $a=1$ and 
\begin{equation*}
c(x)=p(x^4-2x^2).
\end{equation*}
Numerous methods have been developed to simulate RD systems,  see \cite{Bar90,Dil98,Pre89,Spo00} and references therein cited. Here, we simulate equation \eqref{FHNRD1D} on $(-1,1)$ with an explicit scheme of Runge-Kutta 4 type, with a time step of $10^{-4}$ and a space step of $0.1$. The value of  $\epsilon$ is set to $0.1$.
We obtain:
\begin{itemize}
\item if $p>2.1$, the solution converges toward a stationary solution. The figure \ref{Films} represents $u(x,t)$ for fixed $t=550, 560, 570$ and  $p=2.1$. This solution do not change anymore and has reached the stationary solution. The figure \ref{ev} represents the solution $u(0,t)$ and $u(-1,t)$ for $t\in [500,600]$.
\item If $0<p<2$, we observe periodic solutions. Figure  \ref{Films2} represents the solution $u(x,t)$ for fixed $t$  large enough and $p=2$. Figure \ref{ev2} represents  $u(0,t)$ and $u(-1,t)$ for $t\in [500,600]$.
\end{itemize}
\begin{center}
\begin{figure}
\includegraphics[scale=0.3]{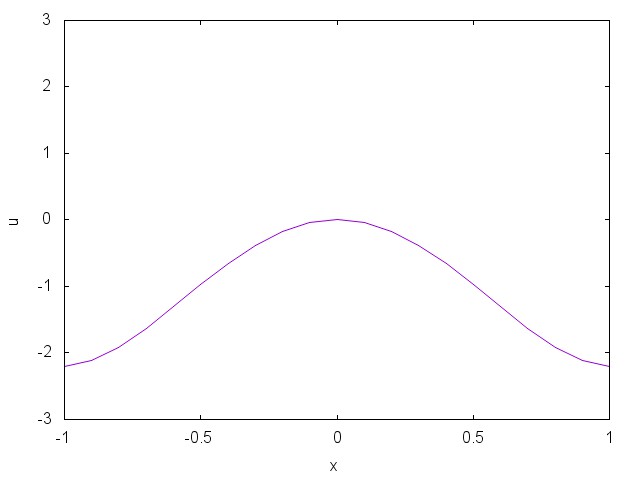}
\includegraphics[scale=0.3]{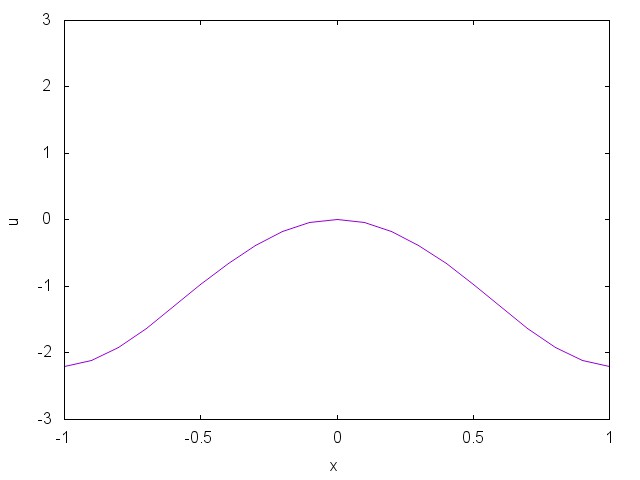}
\includegraphics[scale=0.3]{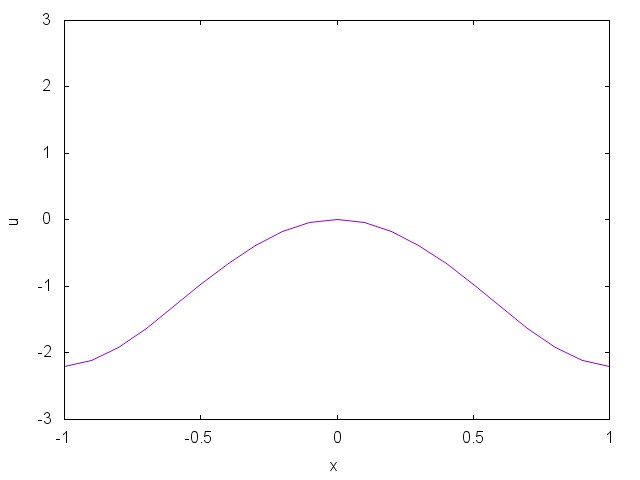}
\caption{Bifurcation between stationary solution and periodic solutions. Here we represent the  solution $u(x,t)$ for $p=2.1$  and  for $t=550, 560, 570$ (from left to right). We observe that the solution  has reached the stationary state $\bar{u}=c(x)=p(x^4-2x).$ }
\label{Films}
\end{figure}
\end{center}
\begin{center}
\begin{figure}
\includegraphics[scale=0.3]{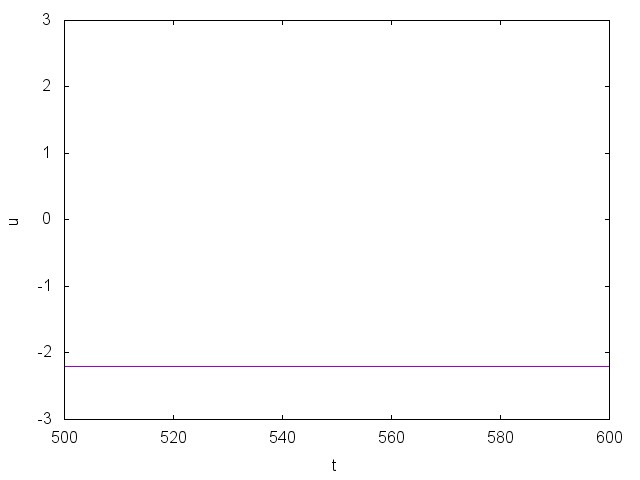}
\includegraphics[scale=0.3]{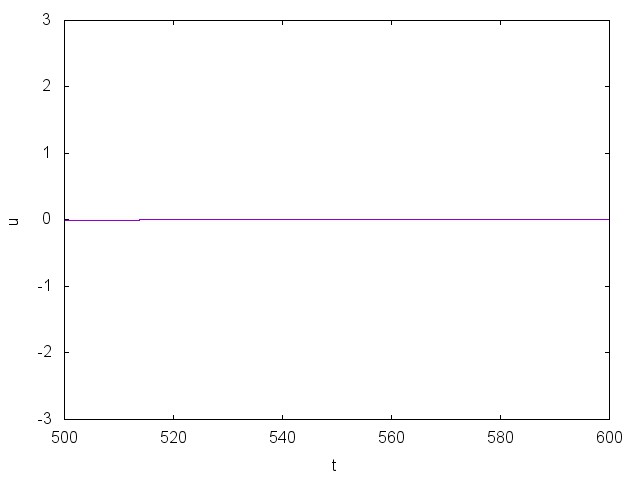} 
\caption{Bifurcation between stationary solution and periodic solutions. Here, we represent the same solution as in figure \ref{Films} but we focus on time evolution at $x=-1$ (left panel) and $x=1$ (right panel). Indeed, we represent $u(-1,t)$ and  $u(0,t)$ for $p=2.1$ and $t\in[500,600]$. We observe that the solution has reached the steady state.}
\label{ev}
\end{figure}
\end{center}

\begin{center}
\begin{figure}
\includegraphics[scale=0.3]{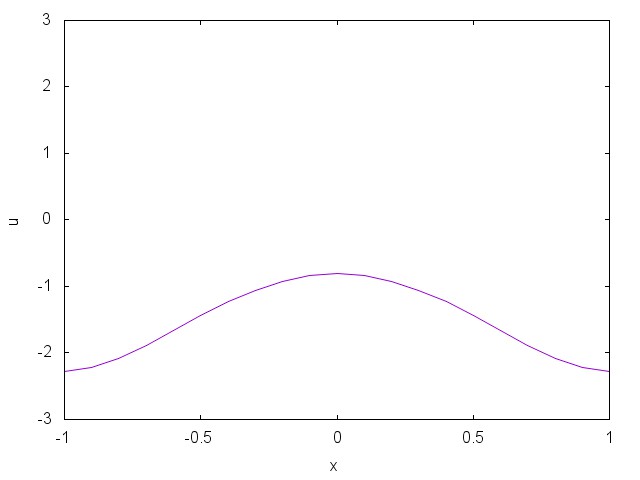}
\includegraphics[scale=0.3]{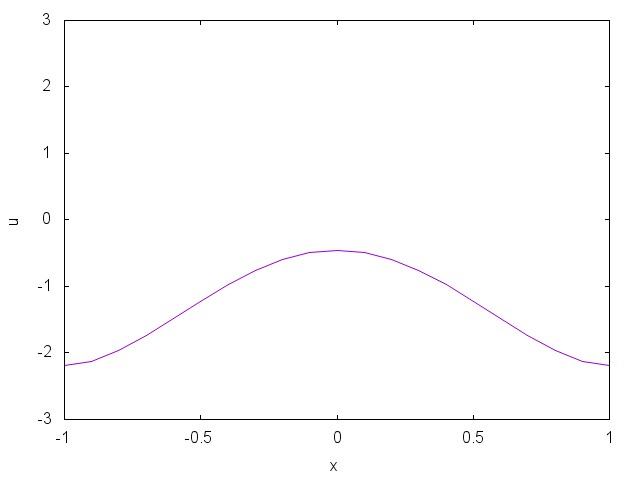}
\includegraphics[scale=0.3]{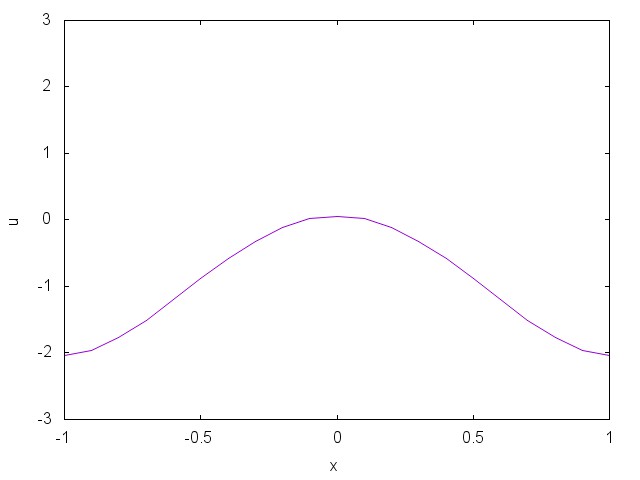}
\caption{Bifurcation between stationary solution and periodic solutions. Here we represent the  solution $u(x,t)$ for $p=2.0$  and  for $t=550, 560, 570$ (from left to right). We observe that the solution  is no longer stationary but it is oscillating. This indicates that the hopf bifurcation parameter value belongs to $(2-2.1)$.}
\label{Films2}
\end{figure}
\end{center}

\begin{center}
\begin{figure}
\includegraphics[scale=0.3]{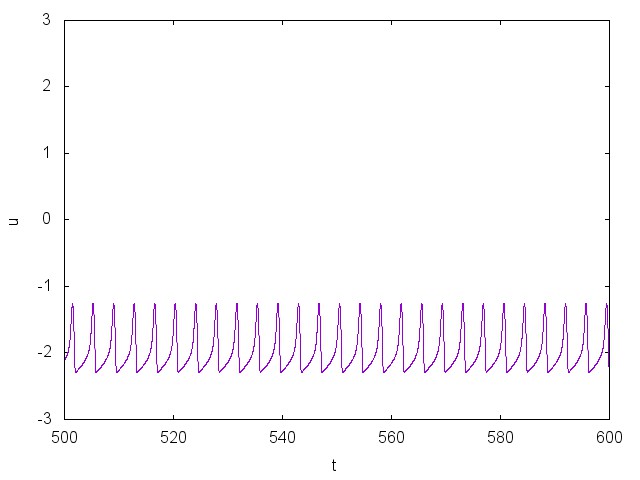}
\includegraphics[scale=0.3]{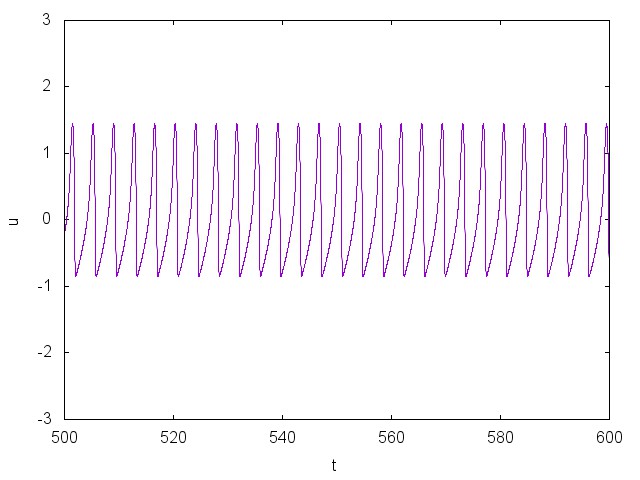} 
\caption{Bifurcation between stationary solution and periodic solutions. Here, we represent the same solution as in figure \ref{Films2} but we focus on time evolution at $x=-1$ (left panel) and $x=1$ (right panel). Indeed, we represent $u(-1,t)$ and  $u(0,t)$ for $p=2.0$ and $t\in[500,600]$. We observe that the solution is oscillating.}
\label{ev2}
\end{figure}
\end{center}
\section{Appendix}
For reader convenience, we give here some results which we have used in the article. 
The following result which proof can be found in \cite{Tes10} gives a result of comparison for solutions of ODEs.
\begin{Theorem}
\label{th:comp}
Assume that $f(x,t)$ is a locally Lipschitz continuous function with respect to $x$
uniformly in $t$. Let $x(t)$ and $y(t)$ be two differentiable functions such that
\begin{equation*}
x(t_0)\leq y(t_0), \quad x'(t)-f(t,x(t)) \leq  y'(t)-f(t, y(t)),\ t\in [t_0, T).
\end{equation*}
Then we have,
\[x(t) \leq y(t),\quad t\in [t_0, T).\]
 Moreover, if 
 \[x(t)<y(t),\] 
 for some $t\in [t_0, T)$,  this remains true for all later times.
\end{Theorem}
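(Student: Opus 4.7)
The plan is to set $g(t) = y(t) - x(t)$ and reduce the statement to two Gr\"onwall-type monotonicity observations, one handling the weak inequality, the other the propagation of strict inequality. Rewriting the hypothesis gives $g'(t) \ge f(t,y(t)) - f(t,x(t))$, and since $x$ and $y$ are continuous on any compact subinterval $[t_0,T'] \subset [t_0,T)$, their graphs lie in a compact set on which $f$ is Lipschitz in its second argument with some constant $L = L(T')$. Hence on $[t_0,T']$ one obtains the sign-sensitive differential inequality
\[
g'(t) \ge f(t,y(t)) - f(t,x(t)) \ge -L\,|g(t)|.
\]

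For the first conclusion I would argue by contradiction. Suppose $g(t^*) < 0$ for some $t^* \in [t_0,T']$. Since $g(t_0) \ge 0$ and $g$ is continuous, let $t_1 \in [t_0,t^*)$ be the largest point with $g(t_1) = 0$; on $(t_1,t^*]$ we have $g < 0$, so $|g| = -g$ and the differential inequality specializes to $g'(t) \ge L\,g(t)$, equivalently $(e^{-Lt}g(t))' \ge 0$. Integrating over $[t_1,t^*]$ gives $e^{-Lt^*}g(t^*) \ge e^{-Lt_1}g(t_1) = 0$, contradicting $g(t^*) < 0$. Letting $T' \uparrow T$ yields $x(t) \le y(t)$ on $[t_0,T)$.

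For the strict propagation statement, assume $g(t_1) > 0$ for some $t_1 \in [t_0,T)$. The first conclusion, applied on $[t_1,T)$, gives $g \ge 0$ there, so $|g| = g$ and the differential inequality becomes $g'(t) \ge -L\,g(t)$, i.e. $(e^{Lt}g(t))' \ge 0$. Integrating from $t_1$ to $t$ yields $e^{Lt}g(t) \ge e^{Lt_1}g(t_1) > 0$, which forces $g(t) > 0$ for all later $t \in [t_1,T)$, as claimed.

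The main obstacle is really only the bookkeeping required by the local (not global) Lipschitz hypothesis: the constant $L$ depends on the compact set enclosing the two graphs, so one must work on closed subintervals $[t_0,T']$ and pass to the limit $T' \uparrow T$ at the end. Once that technicality is set up, the two claims reduce to the elementary sign-sensitive monotonicity of $e^{-Lt}g(t)$ (to rule out the region $\{g<0\}$) and $e^{Lt}g(t)$ (to propagate the region $\{g>0\}$).
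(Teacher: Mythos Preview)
The paper does not actually prove this theorem; it is stated in the Appendix with the remark that the proof ``can be found in \cite{Tes10}''. Your argument is correct and is essentially the standard Gr\"onwall-type proof one finds in that reference: reduce to a differential inequality for $g=y-x$, use the local Lipschitz bound to get $g'\ge -L|g|$ on compact subintervals, and then exploit the sign of $g$ to make $e^{\pm Lt}g$ monotone. The only cosmetic point is the phrase ``let $t_1$ be the largest point with $g(t_1)=0$'': strictly speaking one should take $t_1=\sup\{t\in[t_0,t^*]:g(t)\ge 0\}$ and then observe that continuity forces $g(t_1)=0$; your subsequent argument is unaffected.
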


\begin{Definition}
\label{def:uppersubsol}
A differentiable function $x^+(t)$ satisfying
\[(x^{+})'(t)> f(t, x^+(t)), t \in [t_0, T),\]
is called an upper solution of equation
\[x'(t)=f(t,x(t)), t \in [t_0, T).\]
 Similarly, a differentiable function $x^-(t)$ satisfying
\[(x^{-})'(t) < f(t, x^-(t)), t \in [t_0, T),\]
is called a lower solution  of equation
\[x'(t)=f(t,x(t)), t \in [t_0, T).\]
\end{Definition}

\begin{Theorem}
\label{th:uppersubsol}
 Let $x^+(t), x^-(t)$ be upper and lower solutions  of the differential
equation $x'= f(t, x)$ on $[t_0, T)$, respectively. Then for every solution $x(t)$ on
$[t_0, T)$, we have
\[x(t) < x^+(t), t\in(t_0, T), \mbox{ whenever } x(t_0)\leq  x^+(t_0),\]
\[x(t) >x^-(t), t\in(t_0, T), \mbox{ whenever } x(t_0)\geq  x^-(t_0).\]
\end{Theorem}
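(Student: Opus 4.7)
The plan is to derive Theorem \ref{th:uppersubsol} as an almost immediate consequence of the comparison theorem (Theorem \ref{th:comp}), treating the upper and lower cases symmetrically.

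For the upper-solution statement, I would set $y(t)=x^+(t)$ and apply Theorem \ref{th:comp} to the pair $(x,x^+)$. Since $x$ is a solution we have $x'(t)-f(t,x(t))=0$, while the upper-solution hypothesis gives $(x^+)'(t)-f(t,x^+(t))>0$. In particular $x'(t)-f(t,x(t))\le (x^+)'(t)-f(t,x^+(t))$ on $[t_0,T)$, and together with $x(t_0)\le x^+(t_0)$ Theorem \ref{th:comp} yields the weak inequality $x(t)\le x^+(t)$ on $[t_0,T)$.

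The step that needs a little extra care is upgrading to the strict inequality on $(t_0,T)$, and I would split into two cases. If $x(t_0)<x^+(t_0)$ already, then by the last clause of Theorem \ref{th:comp} the strict inequality propagates to all later times, and we are done. If instead $x(t_0)=x^+(t_0)$, I would exploit the strictness built into Definition \ref{def:uppersubsol}: evaluating at $t_0$ gives $(x^+)'(t_0)>f(t_0,x^+(t_0))=f(t_0,x(t_0))=x'(t_0)$, hence $(x^+-x)(t_0)=0$ and $(x^+-x)'(t_0)>0$. By continuity of the derivative (or directly from the definition of derivative), $x^+-x>0$ on some interval $(t_0,t_0+\delta)$. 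Picking any $t_1\in(t_0,t_0+\delta)$, the strict inequality $x(t_1)<x^+(t_1)$ together with the last clause of Theorem \ref{th:comp} applied on $[t_1,T)$ gives $x(t)<x^+(t)$ on $[t_1,T)$, and combined with the local bound on $(t_0,t_1]$ this yields strict inequality on the whole interval $(t_0,T)$.

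The lower-solution statement is entirely symmetric: I would apply Theorem \ref{th:comp} with the roles of $x$ and $y$ swapped, taking the comparison theorem's $x$ to be $x^-$ and its $y$ to be our solution $x$. The lower-solution inequality $(x^-)'<f(t,x^-)$ and $x'=f(t,x)$ again give the hypothesis of Theorem \ref{th:comp}, and the same case split at $t_0$ (either initial strict inequality, or equality combined with $(x-x^-)'(t_0)>0$) upgrades the weak bound to strict on $(t_0,T)$. The only real obstacle is the case of equality at $t_0$, and this is precisely why the definition of upper/lower solution uses a strict inequality rather than $\ge$; without that strictness one could only conclude the weak comparison.
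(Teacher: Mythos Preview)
Your argument is correct. The two-case split at $t_0$ is exactly the point that needs care, and you handle it properly: when $x(t_0)=x^+(t_0)$, the strict inequality in Definition~\ref{def:uppersubsol} forces $(x^+-x)'(t_0)>0$, so positivity of $x^+-x$ holds on a short right-interval by the definition of the derivative (no need to assume $C^1$), and then the ``strict-inequality propagates'' clause of Theorem~\ref{th:comp} carries it to all of $(t_0,T)$. The lower-solution case is indeed symmetric.

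As for comparison with the paper: there is nothing to compare. Theorem~\ref{th:uppersubsol} is placed in the Appendix together with Theorem~\ref{th:comp} as background material quoted from \cite{Tes10}, and the paper gives no proof of it. Your derivation of Theorem~\ref{th:uppersubsol} from Theorem~\ref{th:comp} is the standard one and is essentially how it is done in Teschl's book, so you have supplied what the paper only cites.
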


%

\end{document}